\newcommand{\Ind}{
 \setbox0=\hbox{$x$}\kern\wd0\hbox to 0pt{\hss$
 \mid$\hss}\lower.9\ht0\hbox to 0pt{\hss$\smile$\hss}\kern\wd0
}
\newcommand{\indep}[3]{
 #1\mathop{\mathpalette\Ind{}}_{#2}#3
}
\newcommand{\Notind}{
 \setbox0=\hbox{$x$}\kern\wd0\hbox to 0pt{\mathchardef
 \nn=12854\hss$\nn$\kern1.4\wd0\hss}\hbox to 0pt{\hss$\mid$\hss}\lower.9\ht0
 \hbox to 0pt{\hss$\smile$\hss}\kern\wd0
}
\newcommand{\G}{\Gamma}
\renewcommand{\d}{\delta}
\renewcommand{\epsilon}{\varepsilon}
\newcommand{\C}{\mathcal{C}}
\newcommand{\K}{\mathcal{K}}
\newcommand{\E}{\mathcal{E}}
\newcommand{\conv}{\mathrm{conv}}
\newcommand{\acl}{\mathrm{acl}}
\newcommand{\dcl}{\mathrm{dcl}}
\newcommand{\val}{\mathrm{val}}
\newcommand{\tp}{\mathrm{tp}}
\newtheorem{defi}{Definition}[section]
\newtheorem{theorem}[defi]{Theorem}
\newtheorem{definition}[defi]{Definition}
\newtheorem{lemma}[defi]{Lemma}
\newtheorem{proposition}[defi]{Proposition}
\newtheorem{corollary}[defi]{Corollary}
\newtheorem{remark}[defi]{Remark}
\def\Aut{\mathop{\rm Aut}\nolimits}
\def\nil2{\mathop{\rm nil2}\nolimits}
\begin{document}
\author{Zahra Mohammadi  Khangheshlaghi \& Katrin Tent}
\thanks{This research was partially funded through the Cluster of Excellence by the
German Research Foundation (DFG) under Germany’s Excellence Strategy EXC
2044–390685587, Mathematics M\"unster: Dynamics–Geometry–Structure and by
CRC 1442 Geometry: Deformations and Rigidity.
}
\date{\today}
\title{On the model theory of the Farey graph}
\begin{abstract}
 We axiomatize the theory of the Farey graph and prove that it is $\omega$-stable of Morley rank $\omega$.
\end{abstract}

\maketitle
\section{Introduction}

Our purpose in this article is to investigate the model-theoretic properties of the Farey graph, 
a combinatorial and geometric structure that arises in a number of mathematical fields. 
In geometric group theory, the Farey graph appears e.g. in the study of the free factor complex and the curve graph.

The Farey graph is isomorphic to the complex of conjugacy classes of free factors of rank 2, $OF_2$ \cite{Free_factor_complex_rigidity}, which appears 
in any higher rank complex of conjugacy classes of free factors. This makes studying the theory of the Farey graph essential 
for studying the free factor complex and the complex of conjugacy classes of free factors from the model-theoretic perspective.

The main purpose of this article is to prove the following:

\begin{theorem} \label{main_th}
    The theory of the Farey Graph is axiomatized by
\begin{enumerate}
\item every vertex is contained in an edge;
\item every edge is contained in two triangles; and

\item every finite subgraph of cardinality at least two has two removable vertices\footnote{In fact, it suffices to ask for one removable vertex.}
\end{enumerate}
and   is $\omega$-stable of Morley rank~$\omega$.
\end{theorem}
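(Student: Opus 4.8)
The plan is to read the two conditions as a first-order theory $T$ — axiom (1) is a single sentence, while axiom (2) becomes a scheme forbidding, for each finite graph $H$ that has no removable vertex, an embedding of $H$ — and then to prove in turn that the Farey graph $F$ is a model of $T$, that $T$ is complete, and that $T$ is $\omega$-stable of Morley rank $\omega$. The verification $F \models T$ I would do directly from the arithmetic description (vertices $\mathbb{Q}\cup\{\infty\}$, with $p/q$ adjacent to $r/s$ iff $|ps-qr|=1$): for (1) the two triangles on an edge $\{p/q,r/s\}$ are those through the mediant $(p+r)/(q+s)$ and through $(p-r)/(q-s)$, and for (2) one orders the vertices of a finite subgraph by denominator size (Stern--Brocot depth) and checks that a vertex of maximal denominator is removable. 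This is the routine part.

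The heart of the argument is completeness, which I would derive from a structure theory driven by axiom (2). The key point is that although $F$ has graph diameter $2$, each edge $\{a,b\}$ is a \emph{separating} geodesic: for $x\notin\{a,b\}$ the side of $\{a,b\}$ on which $x$ lies is definable, and these separations organise any model into a tree of triangles (a dual, Bass--Serre-type tree) whose metric — unlike the graph metric — is unbounded. I would show that (1)--(2) force every model to carry exactly this tree-of-triangles structure, the removable-vertex axiom providing the inductive step that builds and reduces finite configurations. Using this I would run a back-and-forth between $\omega$-saturated models (equivalently, establish quantifier elimination after naming the definable separation relation): given a finite partial isomorphism and a new vertex $x$, axiom (1) supplies the surrounding triangles and axiom (2) lets one locate a matching vertex on the other side. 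This yields $T=\Th(F)$, and in particular completeness.

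For $\omega$-stability I would count $1$-types over a countable model $M$ through the tree structure: the type of $x$ is determined by its address relative to the subtree spanned by $M$, i.e.\ by which separations $x$ satisfies, and $x$ either lands in that countable spanned subtree or determines one of its branches, giving only countably many types. For the Morley rank I would stratify the home sort by dual-tree distance from a fixed vertex $a$: the set $D_n(a)$ of vertices at dual-tree distance at most $n$ from $a$ is definable over $a$ and, I expect, of Morley rank $n$ (the link of $a$ contributing the first rank-$1$ layer and each further generation adding one), so $\mathrm{MR}(x=x)\ge\omega$. The matching upper bound would come from showing that every definable set is a finite Boolean combination of such bounded pieces together with the generic-branch type, so that no definable set has rank $\omega+1$; the value $\omega$ is then realised exactly by an ``end'' type, an infinite branch (an irrational) relative to $M$, which exists in a saturated model even though every vertex of $F$ itself has finite depth.

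I expect the main obstacle to be the structure theory underlying completeness. Since $F$ is not ultrahomogeneous, one must isolate the correct definable invariant — the tree of triangles with its separation relation — and prove that the removable-vertex scheme is \emph{precisely} strong enough to give the extension property that the back-and-forth requires. The second delicate point is pinning the Morley rank to exactly $\omega$: matching the finite-rank lower bounds $D_n(a)$ against an upper bound forbidding rank $\omega+1$ hinges on controlling how rank accumulates along the dual tree rather than along the (bounded) graph distance, which is exactly the feature that makes the rank infinite in the first place.
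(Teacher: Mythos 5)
Your outline reproduces the broad architecture of the paper (verify the axioms, prove completeness via a tree-like structure theory and a back-and-forth between saturated models, then stratify to compute the rank), but it rests on a factual error and is pitched at the wrong structural level. First, the Farey graph does \emph{not} have graph diameter $2$: already $d(\infty,2/5)=3$, and distances grow with continued-fraction length, so the graph metric is unbounded (the paper records this explicitly). Consequently your framing that ``rank accumulates along the dual tree rather than along the (bounded) graph distance'' is off; the rank-$n$ layers sit essentially at graph distance $n$ from $a$ inside a single Farey graph. Second, and more seriously, the models of $T$ are not single Farey graphs: they are arbitrary (possibly disconnected) ``trees of Farey graphs'' obtained by gluing copies of $G_F$ at cut vertices along a forest. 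The tree that controls algebraic closure, the back-and-forth, and forking is therefore not the Bass--Serre tree dual to the triangles of one tessellation, but the coarser bipartite forest on vertices and \emph{equivalence classes of edges} (two edges equivalent iff they lie on a common simple cycle). An $\omega$-saturated model has infinitely many connected components and infinitely many Farey-copies through every vertex, and it is these degrees of freedom that the extension step of the back-and-forth must match; your separation-relation invariant does not see them.

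Beyond that, every genuinely hard step is asserted rather than proved. The back-and-forth needs a combinatorial engine from axiom (2): that a finite configuration over a ``strong'' subset has at least \emph{two} removable vertices unless it is a string of lozenges (so that removing one preserves strength), and that the resulting class of finite graphs with strong embeddings has free amalgamation; ``the removable-vertex axiom provides the inductive step'' is precisely the lemma one has to prove. Similarly, your upper bound on the Morley rank (``every definable set is a finite Boolean combination of bounded pieces plus a generic branch'') is exactly the content of quantifier elimination in an expanded language whose predicates record chains of minimal triangulated cycles, and pinning $RM$ of the distance-$n$ pieces to exactly $n$ requires finiteness lemmas (finitely many geodesics between two vertices, finitely many simple $k$-cycles through a given edge) together with a case analysis of how two such definable sets can intersect. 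As written, the proposal is a plausible table of contents, not a proof.
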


Note that $\omega$-stability was already  proved  in \cite{modeltheorycurvegraph} for a much wider class of graphs, but without computing the Morley rank.
Our proof proceeds along the lines of the coresponding results for the free pseudospace \cite{TENT_2014} and open generalized polygons \cite{AmmerTent2024}.

\section{The theory of the Farey graph}

The Farey graph is a planar graph that can be defined in a number of different ways. We will use the following
combinatorial definition of the Farey graph, see e.g. \cite{office_hours} and \cite{KURKOFKA2021223}.

\begin{definition}
    The Farey graph $F_1=F_1(e_0)$ of level $1$ consists of two triangles sharing the edge $e_0$.
    Inductively, the Farey graph $F_{n+1}(e_0)$ of level $n + 1$ is the graph obtained from $F_n$ by adding a new vertex $v_e$ for every boundary edge $e$ of $F_{n}$,
    with new edges joining $v_e$ precisely to the vertices of $e$.

    Finally, the \textbf{Farey graph} is the union of these  graphs \[G_F =G_F(e_0):= \bigcup_{n \in \mathbb{N}} F_n.\]

\end{definition}

By construction, every edge in $G_F$ is contained in exactly two triangles and the automorphism group of the Farey graph acts transitively on the set of ordered triangles.

Note that the symmetry of $F_1(e_0)$ which swaps the two triangles extends to  a symmetry of $F_n, n\in N$  swapping the two hemispheres of $F_n(e_0)$ and fixing $e_0$.

We  recall some graph-theoretic notions. 
For $a$ and $b$ in a connected graph $A$, the distance $d(a, b)$ between $a$ and $b$ is the smallest number 
$m$ for which there is a path $a = a_0, a_1, ..., a_m = b$ with $a_i$ in $A$, where 
$a_i$ and $a_{i+1}$ are adjacent for $0 \leq i < m$ and we call a path witnessing this distance a \emph{geodesic}. A vertex $x\in A$ is a cut-vertex in $A$ if $A\setminus\{x\}$ is not connected.

For a vertex $a \in A$ the valency of $a$ in $A$,
 $\val(a)=val_A(a)$, is the number of neighbours of $a$ in $A$.

It is easy to see from the construction that any finite subgraph $A$ of $G_F$ with $|A|\geq 2$ has (at least) two \emph{removable vertices}
in the sense of the following definition:

\begin{definition}\label{def:removable}
\begin{enumerate}
    \item 
    Let $A$ be a graph, $x\in A$ a vertex. We call $x$ removable from $A$ if $\val(x)$ is at most $1$ or if $\val(x)=2$ and the two neighbours of $x$ form an edge.

    \item If $A$ is a subgraph of $B$, we say that $A$ is strong in $B$ ($A\leq B$) if any non-empty finite subgraph $B_0$ of $B$ not contained in $A$ has a removable vertex $x\in B_0\setminus A$. 
    \end{enumerate}
\end{definition}

\begin{definition}\label{def:K}
    
Let $(\K,\leq)$ be the class of finite graphs $A$ such that
\begin{enumerate}
    \item every edge is contained in at most two triangles, and
    \item every subgraph $A_0$ of $A$ with $|A_0|\geq 2$ has at least two removable vertices.
\end{enumerate}
\end{definition}

\begin{lemma}\label{lem:FG in K}
Every finite subgraph of the Farey graph belongs to $\K$.
\end{lemma}
\begin{proof}
Let $A$ be a finite subgraph of the Farey Graph. Since  condition (1) is clear, it suffices to show that if $|A|\geq 2$, then $A$ has two removable vertice. We may assume that some edge $e$ of $A$ is contained in two triangles (otherwise the claim holds trivially). Suppose $k$ is minimal with $A\subset F_k(e)$. Then in each of the two hemispheres of $F_k(e)$ the last vertex of $A$ added in the construction of $F_k(e)$ is removable.
\end{proof}

\begin{remark} Note that $\emptyset\leq A$ for all $A\in \K$
and that $\leq$ is transitive on $\K$, i.e. if $A\leq B$ and $B\leq C$, then $A\leq C$.
\end{remark}

The construction of the Farey graph and these definitions immediately imply:

\begin{lemma}\label{lem:strong after removing point}
\begin{enumerate}
\item   If $A$ is strong in $B$, $x,y\in A$, then $d_A(x,y)=d_B(x,y)$.
\item   If $A, B\in \K$, then $A$ is strong in $B$ if and only if we can obtain $B$ from $A$ by successively adjoining removable vertices.
\item  Suppose $A\leq B$ and $x\in B$ is removable over $A$. If one of the following holds
\begin{itemize}
\item $x$ has exactly two neighbours in $A$;
\item $x$ has exactly one neighbour in $A$ and if $val_B(x)\geq 2$, then $x$ is a cut-vertex in $B$; or
\item the connected component of $x$  in $B$ does not intersect $A$.
\end{itemize}
then $A\cup \{x \}\leq B$.
\end{enumerate}

\end{lemma}

\begin{definition}\label{def:free amalgam}
For graphs $A, B, C$ with $A\subset B, C$, we denote by
\[B\otimes_A C\]
the free amalgam of $B$ and $C$ over $A$, i.e. the graph with vertex set $A\cup (B\setminus A)\cup (C\setminus A)$ with the induced edges on $B$ and $C$, respectively, and no edges between vertices of $B\setminus A$ and $C\setminus A$.
\end{definition}

The following is now obvious:
\begin{proposition}\label{prop:amalgamtion}
    The class $(\K,\leq)$ is closed under (AP). More precisely, for  $A, B, C\in \K$ with $A\leq B, C$ there exists $D\in K$ such that $B,C\leq D$ and the embeddings commute.
    
    We let $FG$ denote the Hrushvovki limit of this class.
\end{proposition}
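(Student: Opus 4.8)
The plan is to verify the amalgamation property (AP) for $(\K, \leq)$ by taking $D := B \otimes_A C$, the free amalgam from Definition \ref{def:free amalgam}, and checking that this $D$ lies in $\K$ and that both $B \leq D$ and $C \leq D$. Since $A \leq B$ and $A \leq C$ by hypothesis, and since the free amalgam introduces no new edges between $B \setminus A$ and $C \setminus A$, the essential point is that no new triangles or unexpected incidences are created, so the local edge-triangle structure of $D$ is controlled entirely by that of $B$ and $C$.

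First I would check that $D \in \K$. For the first condition, any edge of $D$ lies entirely in $B$ or entirely in $C$ (there are no cross edges), and any triangle through such an edge must also lie entirely in $B$ or entirely in $C$, since a triangle needs all three of its edges present and no cross edges exist. Hence an edge inside $B$ is contained in at most two triangles because this already holds in $B \in \K$ (a triangle in $D$ through a $B$-edge is forced to be a $B$-triangle), and symmetrically for $C$; so every edge of $D$ is in at most two triangles. For the second condition, I must show every subgraph $D_0 \subseteq D$ has a removable vertex. I would argue that since $A \leq B$ and $A \leq C$, the graph $D_0 \cup A$ has a removable vertex in $(D_0 \cup A) \setminus A$: applying strongness of $A$ in $B$ to the finite subgraph $D_0 \cap B$ gives a removable vertex $x \in (D_0 \cap B) \setminus A$ in the graph $(D_0 \cap B) \cup A$, and because $x$ has no neighbours in $C \setminus A$ its valency and local triangle structure are the same in $D_0 \cup A$ as in $(D_0 \cap B) \cup A$, so $x$ remains removable there. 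Iterating this removal process (and using that after removing such vertices we stay strong, as in Corollary \ref{cor:strong after removing point}) reduces $D_0 \cup A$ down toward $A$, and since $A$ itself is a subgraph of the Farey graph it has removable vertices; this shows every subgraph of $D$ has a removable vertex.

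Next I would verify the strongness $B \leq D$ and $C \leq D$. For $B \leq D$, given a finite subgraph $D_0 \subseteq D$, I need a removable vertex of $D_0 \cup B$ lying in $(D_0 \cup B) \setminus B$, i.e. in $D_0 \cap (C \setminus A)$. This follows from applying $A \leq C$ to the finite subgraph $D_0 \cap C$: strongness yields a removable vertex $x \in (D_0 \cap C) \setminus A$ of the graph $(D_0 \cap C) \cup A$, and since $x \in C \setminus A$ has no neighbours in $B \setminus A$, its neighbourhood and triangle count in $D_0 \cup B$ coincide with those in $(D_0 \cap C) \cup A$, so $x$ is removable in $D_0 \cup B$ over $B$. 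The symmetric argument gives $C \leq D$, and by construction the two embeddings $B \hookrightarrow D$ and $C \hookrightarrow D$ restrict to the given inclusions of $A$, so the diagram commutes.

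The main obstacle, and the step requiring the most care, is the bookkeeping in showing $D \in \K$ (the second bullet), because strongness $A \leq B$ is a statement about the combined graph $B_0 \cup A$ rather than about $B_0$ alone, so I must make sure that when I transfer a removable vertex found in $(D_0 \cap B) \cup A$ over to the larger ambient graph $D_0 \cup A$, its removability genuinely persists; this is exactly where the absence of cross edges in the free amalgam does the work, guaranteeing that $\val(x)$ and the ``two neighbours form an edge'' condition are unchanged by the presence of the $C$-part. Once this transfer principle is established cleanly, the rest is a routine induction, and the final sentence of the proposition simply records that $FG$ is obtained as the Hrushovski/Fra\"{\i}ss\'e limit of the amalgamation class $(\K, \leq)$, whose existence is now guaranteed by (AP) together with the evident joint embedding (via $\emptyset \leq A$ for all $A \in \K$).
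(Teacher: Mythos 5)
There is a genuine gap in your verification that $D = B \otimes_A C$ satisfies the first condition of Definition~\ref{def:K}. You argue that every triangle of $D$ through an edge lies entirely in $B$ or entirely in $C$, and conclude that each edge is in at most two triangles because this holds separately in $B$ and in $C$. But for an edge lying in $A = B \cap C$ the triangles through it in $D$ are the union of its $B$-triangles and its $C$-triangles, and this union can have more than two elements even though each side contributes at most two. Concretely: let $A$ be a single edge $(a_1,a_2)$, let $B = A \cup \{b\}$ with $b$ joined to both $a_1,a_2$, and let $C$ be the lozenge $F_1$ on the edge $(a_1,a_2)$. Then $A \leq B$ and $A \leq C$ and all three lie in $\K$, but in $B \otimes_A C$ the edge $(a_1,a_2)$ is contained in three triangles, so the free amalgam is not in $\K$. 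Your argument would wrongly certify it.

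This is exactly the exceptional case the paper's proof is built around: after reducing inductively to $|B\setminus A| = 1$ with new vertex $b$, the free amalgam works \emph{unless} $b$ has valency $2$ and the edge between its two neighbours is already contained in two triangles in $C$; in that case one does not take the free amalgam at all but sets $D = C$ (embedding $B$ into $C$ by sending $b$ to an existing apex). Your transfer argument for removability (that a vertex of $B\setminus A$ keeps its valency and neighbour-adjacency in the amalgam because there are no cross edges) is sound and is essentially what makes the strongness claims work, but the proposal as written proves a false statement about the triangle condition and omits the case distinction that the correct proof requires.
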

\begin{proof}
Suppose $A, B, C\in \K$ with $A\leq B, C$. We may assume inductively that $|B\setminus A|=1$. Then the free amalgam $D=B\otimes _A C$ belongs to $\K$ unless the vertex $b\in B\setminus A$ has valency $2$ and the edge between the neighbours of $b$ is already contained in two triangles in $C$. In this case, we put $D=C$. Clearly, in either case we have $B, C\leq D$. 
\end{proof}

\begin{remark}
$FG$ and $G_F$ have unbounded diameter.
\end{remark}

We now give axioms for the theory $T=Th(G_F)$ of the Farey graph $G_F$ in the language $L = \{ E \} $ of graphs.

\begin{definition}
    Let $T$ be the theory of graphs axiomatized by 
\begin{enumerate}
   \item every vertex is contained in an edge,
    \item any edge is contained in exactly two triangles; and
    \item every finite subgraph of cardinality at least two has two removable vertices.
\end{enumerate}
\end{definition}
We claim that $T$ is in fact a complete theory.
Clearly, the Farey graph $G_F$ and  the Hrushovski limit $FG$ of $(\K,\leq)$ are models of $T$. 

\begin{remark}\label{rem:triangulated}
Note that (3) implies that any simple cycle in any model of $T$ is triangulated as all vertices in such a cycle have valency $2$.
\end{remark}

We first note the following:

\begin{remark}\label{rem: tree of Fareys}
Let $\G$ be a cycle free graph. Replace every vertex $x\in\G$ by a copy $F_x$ of the Farey graph. For every edge $(x,y)\in \G$, pick vertices $v_x\in F_x, v_y\in F_y$ and identify them. The resulting graph is a model of $T$. 
\end{remark}

We first establish:
\begin{proposition}\label{prop:K-saturated}
    A model  $M$ of $T$ is $\K$-saturated if and only if $\omega$-saturated.
\end{proposition}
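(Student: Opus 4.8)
The plan is to run the standard ``rich $=$ saturated'' argument for Hrushovski limits, with the combinatorics of $\K$ supplying the facts that make it work. Before either implication I would isolate three preliminaries about an arbitrary $M\models T$. First, every vertex of $M$ has infinite valency: around a vertex of finite valency the two triangles forced on each incident edge by axiom~(1) would have their apices among the (finitely many) neighbours, producing a finite subgraph in which every vertex has valency at least three, hence no removable vertex, contradicting axiom~(2). Second, and crucially, every finite $A\subseteq M$ has a \emph{finite} strong closure $\cl(A)\leq M$; this is the main point to be established combinatorially, using Lemma~\ref{lem: 2removables} and Corollary~\ref{cor:strong after removing point} to control how far obstructions to strongness can propagate away from $A$. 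Third, strongness of a finite tuple is type-definable: ``$\bar a\leq M$'' is equivalent to a set $\sigma(\bar a)$ of formulas over $\bar a$ (one, for each finite graph $C$ extending $\bar a$ with no removable vertex in $C\setminus\bar a$, forbidding a copy of $C$ over $\bar a$), so in particular strongness of a fixed finite set is preserved under $\preceq$.

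For the direction from $\omega$-saturation to $\K$-saturation, the requirement that $M$ be a union of finite strong substructures is immediate from finiteness of closures, so only the extension property is at issue. Given $A\leq M$ and $A\leq B\in\K$, I reduce by Corollary~\ref{cor:strong after removing point} to a one-point extension $B=A\cup\{b\}$ with $b$ removable over $A$, and form the partial type $p(x)$ over $A$ asserting that $x$ has the adjacencies of $b$ to $A$ together with $\sigma(A\cup\{x\})$. The task is to show $p$ is finitely satisfiable in $M$; then $\omega$-saturation realises it and produces the desired strong copy of $B$. The removable one-point extensions split into the cases $\val(b)\in\{0,1\}$ and the triangle-completion case $\val(b)=2$: in the first cases a realisation avoiding any prescribed finite family of obstructions is obtained by placing $x$ generically, using infinite valency, finiteness of $\cl(A)$ and the free amalgamation of Proposition~\ref{prop:amalgamtion}; in the triangle-completion case axiom~(1) supplies the at most two candidate apices on the relevant edge, and one checks via the amalgamation analysis that one of them yields a strong extension.

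For the converse, let $M$ be $\K$-saturated. Using the extension property together with finiteness of closures, the isomorphisms between finite strong substructures of two $\K$-saturated models of $T$ form a back-and-forth system, so any two such models are $L_{\infty\omega}$-equivalent and every isomorphism between finite strong substructures is elementary. Now fix an $\omega$-saturated $N\succeq M$, which is $\K$-saturated by the direction already proved, and let $q\in S(C)$ with $C\subseteq M$ finite, realised by some $\bar c'$ in $N$. Put $C^{\ast}=\cl(C)$, a finite strong subset of both $M$ and $N$ by preservation of strongness, and $D=\cl^{N}(C\bar c')$, so that $C^{\ast}\leq D$ with $D$ finite by monotonicity of closure. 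The extension property of $M$ yields a strong embedding of $D$ into $M$ fixing $C^{\ast}$; let $\bar a$ be the image of $\bar c'$. By the back-and-forth this embedding is elementary, so $\tp^{M}(\bar a/C)=\tp^{N}(\bar c'/C)=q$, and $q$ is realised in $M$. Hence $M$ is $\omega$-saturated.

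The main obstacle is the combinatorial preliminary that finite subsets have finite strong closures, together with the closely related consistency check in the first direction, namely that $M$ itself contains realisations of a removable one-point extension avoiding a prescribed finite set of obstructions. The triangle-completion case is the delicate one, since there axiom~(1) leaves only two candidate vertices rather than room to move a vertex away from $A$; once these points are settled, the remainder is the routine model theory of Hrushovski limits.
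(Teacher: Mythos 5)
Your outline correctly identifies the standard rich-equals-saturated template, but as written it has a genuine gap: the two facts you yourself flag as ``the main obstacle'' --- that every finite subset of a model of $T$ has a \emph{finite} strong closure, and that a removable one-point extension can actually be realised strongly in $M$ --- are exactly where the content of the proposition lies, and you do not prove either. For the direction from $\omega$-saturation to $\K$-saturation this matters concretely. The paper's notion of $\K$-saturation only asks that finite subgraphs lie in $\K$ and that the extension property holds, so finite closures are not needed there at all; and instead of routing through a partial type $\sigma(A\cup\{x\})$ and a finite-satisfiability check (which is essentially equivalent to the problem you started with), the paper exhibits the witness directly: for $\val(x)=0$ it uses that an $\omega$-saturated model has infinitely many connected components; for $\val(x)=1$ it uses infinite valency together with the fact that only finitely many neighbours of $a$ meet the rest of $A$; and for $\val(x)=2$ it uses axiom (1) to produce the missing apex and the hypothesis $A\leq M$ to see that this apex has no further edges into $A$ (otherwise $A\cup\{\text{apex}\}$ would have no removable vertex outside $A$). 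In each case Corollary~\ref{cor:strong after removing point} gives strongness of $A\cup\{b\}$ in $M$, since $M\setminus A$ is infinite and hence not a string of lozenges. Your ``amalgamation analysis'' remark for the $\val(x)=2$ case points in the right direction but is not an argument.

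For the converse your back-and-forth argument matches the paper's in substance (the paper compares $M$ with an $\omega$-saturated $M'\equiv M$ rather than an elementary extension, which is all that is needed), and here the existence of finite strong closures is genuinely load-bearing for extending a partial isomorphism to an arbitrary new point; the paper is also silent on this, so I would not fault you relative to the source for needing it --- but since you promote it to the organising principle of the entire proof, including the forward direction where it is dispensable, you must actually supply the combinatorial argument (note that it is not obvious: $\acl$ of a finite set is infinite by Proposition~\ref{prop:acl}, so the finite strong closure, if it exists, is strictly smaller than $\acl$ and its finiteness requires an argument along the lines of the uniqueness of minimal cycles). As submitted, the proposal is an accurate table of contents for a proof rather than a proof.
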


\begin{proof}
First suppose $M$ is an $\omega$-saturated model of $T$. We have to show that  $M$ is $\K$-saturated, i.e. we have to show that if $A$ is a finite subgraph of $M$, then $A\in \K$ (which is clear from the axioms of $T$) and that for every pair $A, B\in \K$ with $A\leq B$ and  every strong embedding of $A$ into $M$ we can find a copy of $B$ over $A$ in $M$. (Note that we identify $A$ with its image in $M$.)

Clearly, it suffices to prove this inductively in the case where $B=A\cup\{x\}$ where $x$ is a removable vertex in $B$. 

First consider the case $\val(x)=0$. Note that an $\omega$-saturated model has infinitely many connected components. Since $A$ is finite, we can find a vertex $b\in M$ whose connected component does not intersect $A$. By Lemma~\ref{lem:strong after removing point} we see that if $A\leq M$, then also $A\cup\{ b\}\leq M$.

If $\val(x)=1$, let $a\in A$ be the unique neighbour of $x$. Since every vertex in $M$ has infinite valency and $M$ is $\omega$-saturated, the examples in Remark~\ref{rem: tree of Fareys} show that  we can find a neighbour $b\in M$ of $a$ which is a cut-vertex in $M$.  By Lemma~\ref{lem:strong after removing point} we see that if $A\leq M$, then also $A\cup\{ b\}\leq M$.

    If $x$ has valency $2$ with neighbours $y_1,y_2$ forming an edge in $A$, then this edge is contained in at most $2$ triangles in $B$. Since every edge of $M$ is contained in exactly two triangles, we can find the corresponding triangle in $M$ and we see as before that this extension of $A$ is strong in $M$.

    For the other direction, suppose that $M$ is $\K$-saturated. Then clearly, $M$ is itself a  model of $T$. If $M'$ is an $\omega$-saturated structure elementarily equivalent to $M$, then by the first direction we see that $M'$ is $\K$-saturated. Now $M$ and $M'$ are also partially isomorphic and so $M$ is $\omega$-saturated.
\end{proof}

\begin{corollary}\label{cor:th(FG)}
   $T$ is complete and, hence, is the theory of the Farey Graph.  
\end{corollary}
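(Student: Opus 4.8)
The plan is to deduce completeness by showing that any two models of $T$ are elementarily equivalent; since $G_F\models T$ (as already noted), completeness then forces $T=\Th(G_F)$. The bridge is Proposition~\ref{prop:K-saturated}. First I would pass from an arbitrary model to a well-behaved one: for any $M\models T$ there is an $\omega$-saturated model $M^*$ with $M\equiv M^*$ (such a model exists for every theory, e.g. as a suitable ultrapower or union of a saturation chain), and by Proposition~\ref{prop:K-saturated} this $M^*$ is $\K$-saturated. Hence it suffices to show that any two $\K$-saturated models of $T$ are elementarily equivalent, and for this I would prove the stronger statement that they are \emph{partially isomorphic} (equivalently, $L_{\infty\omega}$-equivalent), which implies $\equiv$ by the Karp/Fra\"iss\'e back-and-forth criterion.

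To produce the partial isomorphism, let $M,N$ be $\K$-saturated models of $T$ and let $I$ be the set of graph isomorphisms $f\colon A\to B$ with $A\leq M$ and $B\leq N$ finite and strong. Then $I\neq\emptyset$, since $\emptyset\leq A$ for every $A\in\K$ puts the empty map in $I$. The heart of the argument is the back-and-forth property: given $f\colon A\to B$ in $I$ and a vertex $m\in M$, I would first enlarge $A$ to a finite strong substructure $A'\leq M$ with $A\leq A'$ and $m\in A'$; transporting the strong pair $A\leq A'$ across $f$ and invoking $\K$-saturation of $N$, which realizes every strong one-point—and hence, inductively, every strong finite—extension of $B=f(A)$ (amalgamation being available by Proposition~\ref{prop:amalgamtion}), I obtain a finite strong $B'\leq N$ and an isomorphism extending $f$ onto $A'$. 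The forth direction is symmetric, so $I$ is a back-and-forth system, $M\cong_p N$, and therefore $M\equiv N$.

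The main obstacle is the enlargement step, i.e. showing that every finite set of vertices in a model of $T$ lies in a \emph{finite} strong substructure; equivalently, that strong closures in $(\K,\leq)$ are finite, so that finite strong substructures are cofinal. I expect this to follow from the triangulated (chordal) geometry forced by the axioms: by Remark~\ref{rem:triangulated} no model of $T$ contains an induced cycle of length $\geq 3$ without a chord, so the structure is a tree-of-triangles in which the strong closure of a finite set is controlled by finitely many geodesics and triangle-completions. Concretely, starting from a finite superset of the given vertices and repeatedly deleting vertices removable over it—using Lemma~\ref{lem: 2removables} and Corollary~\ref{cor:strong after removing point} to control when strongness fails—the process stabilizes at a finite strong set containing the prescribed vertices. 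Once finite closures are established, the back-and-forth goes through and the chain $M\equiv M^*\cong_p N^*\equiv N$ for arbitrary $M,N\models T$ yields completeness of $T$, whence $T=\Th(G_F)$.
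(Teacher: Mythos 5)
Your argument is essentially the paper's own proof: pass to an $\omega$-saturated model elementarily equivalent to the given one, apply Proposition~\ref{prop:K-saturated} to upgrade this to $\K$-saturation, and conclude that $\K$-saturated models are partially isomorphic and hence elementarily equivalent (the paper compares each such model with the Hrushovski limit $FG$ rather than comparing two arbitrary models directly, which amounts to the same thing). The back-and-forth system of finite strong partial isomorphisms and the cofinality of finite strong substructures that you spell out are exactly what the paper leaves implicit in the phrase ``partially isomorphic,'' and your sketch of why finite strong closures exist is sound.
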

\begin{proof}
 Clearly, the Farey graph is a model of $T$. To see that $T$ is complete it  is enough to show that  any model of $T$ is elementarily equivalent to $FG$. So let $M\models T$ and let $M'\equiv M$ be $\omega$-saturated. Then by  Proposition~\ref{prop:K-saturated}, $M'$ is $\K$-saturated and so $M'$ and $FG$ are partially isomorphic and hence elementarily equivalent. 
\end{proof}

We can now classify the models of $T$ and we will show that every model is of the form as described in Remark\ref{rem: tree of Fareys} we introduce the following equivalence relation: 

\begin{definition}
    Let $M$ be a  model of $T$.
   Two edges in $M$ are \textbf{equivalent} if they generate the same Farey graph as a subgraph of $M$. 
\end{definition}

\begin{remark}
    Removing a single vertex from $G_F$ leaves the remaining graph connected. Hence for every $x, y \in G_F$, there is a simple cycle in $G_F$ containing $x$ and $y$. Thus, we could equivalently  define the equivalence relation on the edges by saying that two edges are equivalent if and only if they are contained in a simple cycle.
\end{remark}

\begin{lemma}\label{cluster_lemma}
    Two equivalence classes intersect at most in a single vertex.
    
\end{lemma}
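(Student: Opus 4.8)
The plan is to use the reformulation of the equivalence relation recorded just before the lemma: two edges are equivalent precisely when they lie on a common simple cycle of $M$. Write $F_1=\langle C_1\rangle$ and $F_2=\langle C_2\rangle$ for the Farey subgraphs generated by two distinct equivalence classes $C_1\neq C_2$. I first record that the edge set of $F_i$ is exactly $C_i$: a copy of the Farey graph is $2$-connected (removing a single vertex leaves it connected), so any two of its edges lie on a common simple cycle and are therefore equivalent; hence all edges of $F_i$ form one class, namely $C_i$. In particular $F_1\neq F_2$ as subgraphs, and the statement to prove is that $S:=V(F_1)\cap V(F_2)$ has at most one element.

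Assume for contradiction that $S$ contains two distinct vertices. The first step is to produce a short bridge inside $F_1$ whose interior stays off $S$. Choosing any two distinct vertices of $S$ and a simple path between them in the connected graph $F_1$, and then passing to the segment between two consecutive vertices of $S$ along that path, I obtain distinct $x,y\in S$ together with a simple path $P\subseteq F_1$ from $x$ to $y$ whose interior vertices avoid $S$. Since the interior of $P$ lies in $V(F_1)\setminus S$, it is disjoint from $V(F_2)$.

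The second step amalgamates $P$ with a path in $F_2$. As $F_2$ is connected and $x,y\in V(F_2)$, pick any simple path $Q\subseteq F_2$ from $x$ to $y$. Because every vertex of $Q$ lies in $V(F_2)$ while the interior of $P$ is disjoint from $V(F_2)$, the paths $P$ and $Q$ meet only in $\{x,y\}$. If $P$ and $Q$ were equal they would both be the single edge $xy$, which would then lie in $C_1\cap C_2$, contradicting $C_1\neq C_2$; the same contradiction arises directly whenever $xy$ is an edge belonging to both $F_1$ and $F_2$. Otherwise $P\cup Q$ is a genuine simple cycle of length at least $3$, containing at least one edge of $P\subseteq F_1$, i.e. an edge of $C_1$, and at least one edge of $Q\subseteq F_2$, i.e. an edge of $C_2$.

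Finally, by the simple-cycle description of the equivalence relation, any two edges on the simple cycle $P\cup Q$ are equivalent; hence an edge of $C_1$ is equivalent to an edge of $C_2$, forcing $C_1=C_2$, a contradiction. I expect the only delicate point to be the routing in the first step that guarantees $P$ and $Q$ are internally disjoint, since this is exactly what makes $P\cup Q$ a simple cycle; it is secured by taking $P$ between consecutive elements of $S$, and the degenerate case of a single common edge must be dispatched separately, as above.
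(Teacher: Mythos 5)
Your proof is correct and follows essentially the same route as the paper: join two common vertices by a path inside each class and derive either a shared edge or a simple cycle containing edges of both classes, which forces the classes to coincide. The only difference is cosmetic: you take care to make the two paths internally disjoint and invoke the simple-cycle characterization of equivalence directly, whereas the paper glosses over the disjointness and appeals to the triangulation remark instead.
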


\begin{proof}
    If $x$ and $y$ are vertices belonging to equivalence classes $E_1$ and $E_2$, then they
    are connected by paths in $E_1$ and in $E_2$. Either the paths agree, and
    then $E_1$ and $E_2$ contain a common edge and hence $E_1 = E_2$, or we obtain a simple
    cycle. The cycle is triangulated by Remark~\ref{rem:triangulated} and hence again $E_1$ and $E_2$ contain a common edge.
\end{proof}
 
By abuse of terminology we say that a vertex belongs to an equivalence class of edges if it is contained in one of the edges of the equivalence class.

\begin{definition}\label{def:tree}
For any model $M$ of $T$ we define a bipartite graph $G_{tree}(M)$ on the set of vertices of $M$
and the set $\E$ of equivalence classes of $M$, where edges between a vertex $x\in M$ and an equivalence class $E\in\E$ are given by containment,
i.e. a vertex is connected to an equivalence class of edges if it is contained in the equivalence class.
\end{definition}

\begin{lemma}
    For every model $M$ of $T$, $G_{tree}(M)$ is a forest.
\end{lemma}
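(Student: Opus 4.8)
The plan is to show directly that the bipartite graph $G_{tree}(M)$ contains no cycle. Since $G_{tree}(M)$ is bipartite, any cycle has even length and alternates between vertices of $M$ and equivalence classes; write such a cycle as $x_0, E_0, x_1, E_1, \dots, x_{k-1}, E_{k-1}, x_0$ with the $x_i$ pairwise distinct, the $E_i$ pairwise distinct, and $x_i$ contained in both $E_{i-1}$ and $E_i$ (indices modulo $k$), where $k\geq 2$. I would assume toward a contradiction that such a cycle exists and fix one of \emph{minimal} length. The base case $k=2$ is immediate: it says the two distinct classes $E_0$ and $E_1$ both contain the two distinct vertices $x_0$ and $x_1$, contradicting Lemma~\ref{cluster_lemma}.

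For $k\geq 3$, I would first exploit minimality together with Lemma~\ref{cluster_lemma} to pin down how the classes meet. Consecutive classes $E_{i-1}$ and $E_i$ share the vertex $x_i$, and by Lemma~\ref{cluster_lemma} this is their only common vertex. Non-consecutive classes $E_i, E_j$ must be vertex-disjoint: a common vertex $v$ is joined to both $E_i$ and $E_j$ in $G_{tree}(M)$, and whether $v$ lies on the chosen cycle (a chord) or not (a new connection), it produces a strictly shorter cycle in $G_{tree}(M)$, contradicting minimality. Each class $E_i$ spans a copy of the Farey graph, which is connected, so I can choose inside $E_i$ a simple path $p_i$ from $x_i$ to $x_{i+1}$; this path has at least one edge since $x_i\neq x_{i+1}$, and that edge belongs to the class $E_i$. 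By the disjointness just established, the paths $p_0,\dots,p_{k-1}$ meet only at the shared endpoints $x_i$, so their concatenation $C=p_0p_1\cdots p_{k-1}$ is a \emph{simple} cycle in $M$ whose edges come from the pairwise distinct classes $E_0,\dots,E_{k-1}$.

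To finish, I would invoke the characterization recorded just before Lemma~\ref{cluster_lemma}: two edges are equivalent exactly when they lie on a common simple cycle. Applied to $C$, this forces all edges of $C$ into a single equivalence class, contradicting that $C$ contains edges of the distinct classes $E_0$ and $E_1$. The step I expect to be the main obstacle is the intersection bookkeeping in the middle paragraph: the concatenation argument only works once one knows the Farey subgraphs attached to non-consecutive classes do not meet, for otherwise the walk $p_0\cdots p_{k-1}$ could fail to be simple or an extracted simple cycle could be trapped inside one class, destroying the contradiction. Controlling these intersections is precisely where the single-vertex bound of Lemma~\ref{cluster_lemma} and the minimality of the chosen cycle have to be combined carefully.
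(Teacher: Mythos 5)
Your proof is correct and follows essentially the same route as the paper: a cycle in $G_{tree}(M)$ is converted into a simple cycle in $M$ by joining the hinge vertices inside each equivalence class, and then Remark~\ref{rem:triangulated} together with the simple-cycle characterization of equivalence forces all of its edges into one class, a contradiction. You simply spell out more carefully (via minimality of the cycle and Lemma~\ref{cluster_lemma}) why the concatenated paths really form a \emph{simple} cycle, a point the paper's one-line proof leaves implicit.
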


\begin{proof}
    If the graph $G_{tree}(M)$ contains a cycle, this cycle translates into a cycle in $M$ (by connecting the vertices  of
    the bipartite graph inside their equivalence classes). Any cycle in $M$ is
    triangulated by Remark~\ref{rem:triangulated}, hence the cycle is contained in a single equivalence class.    
\end{proof}

We therefore established:
\begin{theorem}\label{thm:classification}
Every model of $T$ arises from a cycle free graph as described in Remark~\ref{rem: tree of Fareys}.
\end{theorem}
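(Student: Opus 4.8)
The plan is to show that the equivalence classes of edges are exactly the ``Farey blocks'' out of which $M$ is assembled, and that these blocks are glued along single vertices in the forest pattern recorded by $G_{tree}(M)$. The argument thus splits into two ingredients: first, that every equivalence class is, as a subgraph of $M$, isomorphic to $G_F$; and second, that the way the blocks overlap and fit together is precisely the tree-gluing of Remark~\ref{rem: tree of Fareys}.

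For the first ingredient I would fix an edge $e=\{a,b\}$ in an equivalence class $E$ and reconstruct a Farey graph inside $M$ by the defining induction. Axiom (1) forces $e$ to lie in exactly two triangles, producing a lozenge $F_1\subseteq M$; and given $F_n\subseteq M$, each edge of $F_n$ that so far lies in only one triangle is forced by axiom (1) to lie in exactly one further triangle of $M$, whose apex we adjoin to form $F_{n+1}$. I would first check that each such apex is a genuinely new vertex: were it an already present vertex of $F_n$, one would create either a copy of $K_4$ (which has no removable vertex and so is excluded from $\K$) or a non-triangulated simple cycle, contradicting Remark~\ref{rem:triangulated}. Hence $\bigcup_n F_n$ is built by exactly the same deterministic recipe as $G_F$ and is therefore isomorphic to it. I would then identify $\bigcup_n F_n$ with the class $E$: every edge of $\bigcup_n F_n$ is equivalent to $e$, since any two vertices of a Farey graph lie on a common simple cycle; and conversely any edge equivalent to $e$ lies on a simple cycle through $e$, which is triangulated by Remark~\ref{rem:triangulated} and whose triangles are, one after another, forced to lie in $\bigcup_n F_n$.

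For the second ingredient, Lemma~\ref{cluster_lemma} gives that two distinct classes meet in at most one vertex, so the blocks $F_x\cong G_F$ overlap only in such cut vertices, and the bipartite incidence graph $G_{tree}(M)$ is a forest. I would take the cycle-free graph of Remark~\ref{rem: tree of Fareys} to have the equivalence classes as its vertices, organising the blocks that share a common cut vertex into a path so that each edge records a single gluing; because $G_{tree}(M)$ is a forest, the resulting graph $\G$ is cycle free. Replacing each vertex of $\G$ by its block $F_x\cong G_F$ and identifying the chosen cut vertices then reproduces $M$ exactly, which is the description given in Remark~\ref{rem: tree of Fareys}.

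The main obstacle is the first ingredient, specifically the verification that at every stage of the reconstruction the forced apex is a new vertex and that no edge outside $\bigcup_n F_n$ can be equivalent to $e$; this is where the two constraints ``every edge lies in exactly two triangles'' and ``every simple cycle is triangulated'' must be combined with care. Once each block is pinned down to be $G_F$, the assembly through $G_{tree}(M)$ is routine bookkeeping, modulo the harmless convention of treating edgeless (isolated-vertex) components separately, as these do not arise from the construction of Remark~\ref{rem: tree of Fareys}.
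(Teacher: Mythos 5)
Your proposal is correct and follows essentially the same route as the paper: decompose $M$ into the edge-equivalence classes, observe via Lemma~\ref{cluster_lemma} that distinct classes meet in at most one vertex, and read off the tree-gluing from the fact that $G_{tree}(M)$ is a forest. The one place you go beyond the paper is in explicitly verifying (via the two-triangles axiom, the exclusion of $K_4$ by removability, and Remark~\ref{rem:triangulated}) that every edge of a model really does generate an induced copy of $G_F$ --- a step the paper builds into its definition of edge-equivalence without proof --- and your sketch of that verification identifies the correct mechanisms.
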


\section{Quantifier elimination}\label{sec:QE}

We now extend the language of graphs to a language $L'$ to obtain quantifier elimination for $T$.
We start with an easy lemma: 
\begin{lemma}\label{lem:finitely_many_path}
Let $M$ be a model of $T$.
    For  any two vertices $x, y\in M$ 
  there are only finitely many geodesics from $x$ to $y$.
\end{lemma}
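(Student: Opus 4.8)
The plan is to show that between any two vertices $x,y$ in a model $M$ of $T$ there are only finitely many geodesics. My first move would be to exploit the structural classification from Theorem~\ref{thm:classification}: $M$ arises from a cycle-free graph $\Gamma$ by replacing each vertex with a copy of the Farey graph and gluing copies along single identified vertices. Since $G_{tree}(M)$ is a forest, between $x$ and $y$ there is at most one reduced path in $G_{tree}(M)$, and hence the Farey copies that a geodesic from $x$ to $y$ must traverse are determined: any geodesic is forced through a specific, finite sequence of equivalence classes $E_1,\dots,E_m$, passing through the unique cut-vertices where consecutive classes meet. This reduces the problem to the case where $x$ and $y$ lie in a single equivalence class, i.e.\ inside one copy of the Farey graph $G_F$.

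Having localized to a single Farey copy, I would prove the statement there directly. The key point is that in $G_F$ every edge lies in exactly two triangles and, by Remark~\ref{rem:triangulated}, every simple cycle is triangulated, so the graph is built up from lozenges in a tree-like fashion via the construction of the $F_n$. The plan is to argue that a geodesic of length $m$ from $x$ to $y$ cannot ``wander'': at each step the distance to $y$ must strictly decrease, and the triangulated structure severely limits the branching. Concretely, I expect to show by induction on $d(x,y)=m$ that the set of first edges $\{x,x_1\}$ that can begin a geodesic is finite — indeed that only finitely many neighbours $x_1$ of $x$ satisfy $d(x_1,y)=m-1$ — and then apply the induction hypothesis to each such $x_1$. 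Since a geodesic of length $m$ is a concatenation of an initial edge with a geodesic of length $m-1$, finiteness of the branching at each vertex together with the bounded length $m$ yields finitely many geodesics overall.

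The main obstacle is controlling the branching: a priori a vertex in a Farey graph can have infinite valency in an $\omega$-saturated model, so I cannot simply bound the number of neighbours. The crux is therefore to show that among the infinitely many neighbours of $x$, only finitely many lie strictly closer to $y$. Here I would use the triangulated/tree-of-lozenges structure: if a neighbour $x_1$ of $x$ satisfies $d(x_1,y) = d(x,y)-1$, then $x_1$ must lie on the unique ``spine'' of lozenges connecting $x$ towards $y$, and the branching at each lozenge is bounded (each edge sits in exactly two triangles, contributing a controlled, finite number of geodesic continuations). I would make this precise by passing to a finite strong subgraph: by $\K$-saturation and Corollary~\ref{cor:strong after removing point}, any geodesic from $x$ to $y$ lies inside some finite strong subgraph $A \leq M$ with $d_A = d_M$ on $A$, and in such finite triangulated graphs the number of geodesics of a fixed length is manifestly finite.

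Finally I would assemble these pieces: finiteness inside a single Farey copy, combined with the uniqueness of the forced sequence of equivalence classes from the forest structure of $G_{tree}(M)$, gives that the total number of geodesics from $x$ to $y$ is a finite product of the per-copy counts, hence finite. The step I anticipate being most delicate is the careful verification that ``moving strictly closer to $y$ forces one onto the lozenge spine,'' since this is where the specific geometry of the Farey graph — as opposed to a general triangulated graph — does the essential work.
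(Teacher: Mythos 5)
Your overall strategy---reduce via the forest $G_{tree}(M)$ to a single equivalence class and then induct on $d(x,y)$ by bounding the number of neighbours of $x$ that can begin a geodesic---has the same shape as the paper's proof, which is also an induction on $d(x,y)$ that bounds the first-step branching. But the crucial step is missing: you never actually prove that only finitely many neighbours $x_1$ of $x$ with $d(x_1,y)=d(x,y)-1$ can occur as second vertices of geodesics. Neither of your proposed justifications closes this gap. The appeal to a ``lozenge spine'' is left entirely heuristic, and the attempted rigorous version via finite strong subgraphs is circular: each individual geodesic lies in some finite $A\leq M$ with $d_A=d_M$ on $A$, but different geodesics may lie in different such $A$, so this gives no bound on the total number of geodesics --- the existence of a single finite subgraph containing all of them is essentially the statement you are trying to prove. (The preliminary reduction through $G_{tree}(M)$ is fine but also unnecessary; the paper works directly in an arbitrary model from the axioms.)

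The missing idea, which is the whole content of the paper's proof, is this: if two geodesics from $x$ to $y$ are internally disjoint, their concatenation is a simple cycle $C$. By axiom (2), $C$ has a removable vertex; but an internal vertex of $C$ has valency $2$ in $C$ and cannot be removable, since an edge between its two neighbours would shortcut a geodesic. Hence the removable vertices of $C$ are $x$ and $y$, which forces the two second vertices $x_1,y_1$ to be adjacent, i.e.\ $(x,x_1,y_1)$ is a triangle. A further internally disjoint geodesic with second vertex $z_1\notin\{x_1,y_1\}$ yields another triangle $(x,x_1,z_1)$ on the edge $(x,x_1)$; since every edge lies in exactly two triangles, there is no room for a fourth choice, so every geodesic from $x$ to $y$ passes through one of at most three vertices adjacent to $x$, and the induction hypothesis applied at each of them finishes the proof. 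Without an argument of this kind --- using the removability axiom together with the two-triangles condition --- your induction does not get off the ground, precisely because, as you note, vertices may have infinite valency.
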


\begin{proof}
    We prove the claim by induction on $d(x,y)=k$. For $k=1$ the claim is clear. Now let $k>1$ and assume the claim is proved for all
$k'<k$.

  Let $p_0=(x_0=x,\ldots, x_k=y)$ be a path and let $(p_1=y_0=x,\ldots,
y_k=y)$ be a path with $x_i\neq y_i, i=1\ldots k-1$. If no such path
exists we are done by induction.  The concatenation of $p_0$ and $p_1$
is a simple cycle. This cycle has at least two removable points and since
all vertices have valency $2$ and $d(x,y)=k$, the removable points must be
$x$ and $y$. Thus, $(x_0,x_1,y_1)$ form a triangle.
If there is another path $p_2=(z_0=x,\ldots, z_k=y)$ with $z_1\neq
x_1,y_1$, then by the same argument $(x_0,x_1,z_1)$ form a triangle.
Since the edge $(x_0,x_1)$ is contained in exactly two triangles, this
shows that any path from $x$ to $y$ of length $\K$ passes through one of
$\{x_1, y_1,z_1\}$ and the claim follows from the induction assumption.
\end{proof}

\begin{corollary}
        For any two vertices $x,y$ of the Farey graph the algebraic
        closure  of $x,y$ is the whole graph.
\end{corollary}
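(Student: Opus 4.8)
The plan is to bootstrap from the finiteness of geodesics to a single algebraic edge, and then to propagate algebraicity through the triangle structure until the whole graph is exhausted. Throughout I work in $G_F$ and use that distance relations are first-order definable: for each fixed $m$ the relation $d(u,w)\le m$ is existential, so $d(u,w)=m$ is definable, and hence for vertices $x,y$ with $d(x,y)=k$ the set $P$ of vertices lying on a geodesic from $x$ to $y$, namely $\{w: d(x,w)+d(w,y)=k\}$, is definable over $\{x,y\}$.

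First I would assume $x\neq y$, so that $k=d(x,y)\ge 1$. By Lemma~\ref{lem:finitely_many_path} there are only finitely many geodesics from $x$ to $y$, whence $P$ is finite. A finite set definable over $\{x,y\}$ lies in $\acl(x,y)$, so $P\subseteq\acl(x,y)$. Picking any geodesic $x=a_0,a_1,\dots,a_k=y$, we obtain in particular that $a_1\in\acl(x,y)$, so the edge $\{x,a_1\}$ lies entirely in $\acl(x,y)$.

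Next I would show that algebraicity spreads along triangles. If an edge $\{a,b\}$ is contained in $\acl(x,y)$, then the formula $E(w,a)\wedge E(w,b)$ defines the set of common neighbours of $a$ and $b$; by axiom (1) this set has exactly two elements, the apices of the two triangles on $\{a,b\}$. A two-element set definable over $\{a,b\}\subseteq\acl(x,y)$ is contained in $\acl(a,b)\subseteq\acl(x,y)$, so both apices, and hence the new edges joining them to $a$ and $b$, lie in $\acl(x,y)$.

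Finally I would iterate: starting from $\{x,a_1\}$ and repeatedly completing every known edge to its two triangles produces exactly the Farey subgraph generated by that edge, by the inductive construction of the Farey graph. Since removing a vertex leaves $G_F$ connected, any two edges of $G_F$ lie in a common simple cycle and are therefore equivalent, so $G_F$ is a single equivalence class and the Farey graph generated by $\{x,a_1\}$ is all of $G_F$. This gives $\acl(x,y)=G_F$. The main obstacle is the very first step: without the bound from Lemma~\ref{lem:finitely_many_path} the definable set $P$ could a priori be infinite, and it is precisely the finiteness of geodesics that makes the initial edge algebraic and lets the triangle-propagation get started.
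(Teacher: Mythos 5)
Your proof is correct and follows essentially the same route as the paper's: use the finiteness of geodesics (Lemma~\ref{lem:finitely_many_path}) to place an edge inside $\acl(x,y)$, and then use that every edge lies in exactly two triangles to propagate algebraicity until the whole graph (a single equivalence class) is exhausted. You have merely spelled out the details that the paper's one-line proof leaves implicit.
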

 \begin{proof}
     This follows directly from Lemma~\ref{lem:finitely_many_path} and the fact that the Farey graph is algebraic in any edge of the graph.
 \end{proof}

\begin{definition}\label{def:convex}
Let $M$ be a model of $T$. We identify the vertices of $M$ with the vertices of $G_{tree}(M)$ corresponding to those of $M$ and for every subset $B$ of $M$ we let $\conv(B)$ denote the convex closure of $B$ in $G_{tree}(M)$.

Let $\conv_M(B)\subset M$ be the set of all vertices in $G_{tree}(M)$  that belong to a vertex of type $\E$ in $\conv(B)$.
\end{definition}

\begin{corollary}\label{cor:gate}
    Let $M$ be a model of $T$, $x \in M$ and $B\neq\emptyset$ be a subset of $M$. 
    If $x \notin \conv_M(B)$, then there exists $y \in \conv_M(B)$ such that
    every path from $x$ to  any vertex in $\conv_M(B)$ passes through $y$. We call $y$ the gate for $x$ to $\conv_M(B)$. 
\end{corollary}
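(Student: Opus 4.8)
The plan is to exploit the two facts established above: that $G_{tree}(M)$ is a forest, and that the equivalence classes meet in at most one vertex. The gate will be produced as a nearest-point projection in this forest, once I translate paths in $M$ into walks in $G_{tree}(M)$.

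First I would record the correspondence between paths. Any path $x=v_0,\dots,v_n=w$ in $M$ gives rise to a walk $v_0,E_{01},v_1,E_{12},\dots,v_n$ in $G_{tree}(M)$, where $E_{i,i+1}$ is the equivalence class of the edge $v_iv_{i+1}$; indeed $v_i$ is contained in both $E_{i-1,i}$ and $E_{i,i+1}$, so these adjacencies hold by definition of $G_{tree}(M)$. In particular $x$ and $w$ lie in the same connected component $C$ of the forest $G_{tree}(M)$. I may assume $\conv_M(B)\neq\emptyset$, since otherwise there is nothing to prove, and I restrict attention to those $w\in\conv_M(B)$ that are reachable from $x$ in $M$; all of these lie in $C$ (if there are none, the conclusion is vacuous and any element of $\conv_M(B)$ serves as $y$). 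Set $S:=\conv(B)\cap C$, a nonempty convex and hence connected subtree of $C$; since some reachable $w$ belongs to an $\E$-vertex of $\conv(B)$ lying in $C$, the set $S$ contains at least one vertex of type $\E$.

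Next I would take the nearest-point projection $p$ of $x$ onto $S$ in the tree $C$, that is, the unique vertex of $S$ closest to $x$, which has the standard property that $p$ separates $x$ from $S\setminus\{p\}$ in $C$. As $G_{tree}(M)$ is bipartite, $p$ is either an $M$-vertex or an equivalence class. I define the gate $y$ to be $p$ itself when $p$ is an $M$-vertex, and to be the $M$-neighbour of $p$ on the geodesic $[x,p]$ when $p$ is an equivalence class $E^{\ast}$. In either case $y$ is an $M$-vertex contained in some $\E$-vertex of $S\subseteq\conv(B)$, so $y\in\conv_M(B)$; moreover $y\neq x$, for $y=x$ would force $x\in\conv_M(B)$. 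The key point to verify is that in both cases removing $y$ from $G_{tree}(M)$ separates $x$ from every $\E$-vertex of $S$: any vertex $w'\in\conv_M(B)$ with $w'\neq y$ is adjacent in $G_{tree}(M)$ to an equivalence class $E'\in S$ lying in the component of $C\setminus\{y\}$ that does not contain $x$, whence $w'$ lies in that same component as well.

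Finally I would conclude by combining the two ingredients. Given any path in $M$ from $x$ to $w'\in\conv_M(B)$, its associated walk in $G_{tree}(M)$ joins $x$ to $w'$ and must therefore pass through the separating vertex $y$; since $y$ is of type $M$, it occurs among the $M$-vertices of the walk, i.e.\ among the vertices $v_0,\dots,v_n$ of the original path, so the path itself passes through $y$. The main obstacle is entirely at the bipartite bookkeeping level: making the two cases for $p$ explicit, confirming that the chosen $M$-vertex $y$ genuinely lies in $\conv_M(B)$ and separates $x$ from the whole of $S$, and checking that a walk in $G_{tree}(M)$ running through an $M$-type vertex corresponds to the underlying path actually visiting that vertex of $M$.
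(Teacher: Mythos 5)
Your proof is correct and follows essentially the same route as the paper: take the nearest-point projection of $x$ onto $\conv(B)$ in the tree $G_{tree}(M)$ and use the fact that this projection separates $x$ from the rest of $\conv(B)$. You are merely more careful than the paper's own two-line argument about the bipartite bookkeeping (whether the projection point is an $M$-vertex or an $\E$-vertex, and why the resulting $y$ lies in $\conv_M(B)$) and about translating paths in $M$ into walks in $G_{tree}(M)$.
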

\begin{proof}
If $|B|=1$, then $\conv_M(B)=B$ and there is nothing to prove. So assume $|B|\geq 2$ and consider the shortest path from $x$ to $\conv(B)$. Since $G_{tree}(M)$ is a tree, there is a unique vertex $y$ in $\conv(B)$ such that for every $b\in \conv(B)$ any path from $x$ to $b$ passes through $y$. Clearly we must have $y\in \conv_M(B)$ and it follows that every  path from $x$ to $b\in\conv_M(B)$ also passes through $y$.
\end{proof}

We can now describe the algebraic closure in $T$. Note that it suffices to describe $\acl(A)$ for finite subsets $A$ of models of $T$. 
\begin{proposition}\label{prop:acl}
Let $M$ be a model of $T$ and let $A\subseteq M$ be a finite subset. Then $\acl(A)=\conv_M(A)$.
\end{proposition}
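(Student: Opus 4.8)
The plan is to prove both inclusions, using the description of $M$ as a tree of Farey graphs from Theorem~\ref{thm:classification}: writing $C:=\conv_M(A)$, the set $C$ consists of $A$ together with all vertices lying in an equivalence class (cluster) that occurs as an $\E$-node of the finite subtree $\conv(A)\subseteq G_{tree}(M)$. We may assume $M$ is $\omega$-saturated, hence $\K$-saturated by Proposition~\ref{prop:K-saturated}. I first record two structural facts to be used freely: each cluster is a \emph{convex} copy of the Farey graph in $M$ (convexity because any cycle in $M$ is triangulated, so a geodesic between two vertices of one cluster stays inside it, by Lemma~\ref{cluster_lemma} and Remark~\ref{rem:triangulated}); and the leaves of the tree $\conv(A)$ all lie in $A$, so every $\E$-node of $\conv(A)$ has at least two vertex-node neighbours inside $\conv(A)$.

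For the inclusion $C\subseteq\acl(A)$, the key observation is that the cut vertices of $M$ lying between elements of $A$ are algebraic over $A$. Indeed, if $w$ is a cut vertex of $M$ separating $a,a'\in A$, then $w$ lies on every path from $a$ to $a'$, in particular on every geodesic; since there are only finitely many geodesics from $a$ to $a'$ by Lemma~\ref{lem:finitely_many_path}, the finite set of vertices occurring on such geodesics is invariant under $\Aut(M/\{a,a'\})$, so $w\in\acl(a,a')\subseteq\acl(A)$. Now fix an $\E$-node $E$ of $\conv(A)$. Its (at least two) vertex-node neighbours in $\conv(A)$ are distinct vertices of the cluster $E$ that are either elements of $A$ or, being internal in the hull, cut vertices of the above kind; in either case they lie in $\acl(A)$. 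Applying to the cluster $E$ the Corollary that the Farey graph is the algebraic closure of any two of its vertices, we obtain $E\subseteq\acl(A)$. Taking the union over all $\E$-nodes of $\conv(A)$ yields $C\subseteq\acl(A)$.

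For the reverse inclusion it is enough to show $\acl(C)=C$, since $A\subseteq C$ gives $\acl(A)\subseteq\acl(C)$ and $C=\conv_M(C)$ is convex. So let $v\in M\setminus C$; by Corollary~\ref{cor:gate} it has a gate $y\in C$, and the path from $y$ toward $v$ first enters a cluster $E$ with $E\cap C=\{y\}$. Let $v_E\in E\setminus\{y\}$ be the vertex of $E$ through which $v$ is attached. The gate property forces every relation of $v$ to $C$ to factor through $y$, so $\tp(v/C)$ is determined by the isomorphism type over $y$ of the hanging subtree carrying $v$; in particular it suffices to produce infinitely many realizations of this type. Since $E$ is a copy of the Farey graph and the stabiliser of a single vertex in $\Aut(G_F)$ has only infinite orbits on the remaining vertices (equivalently, a single vertex is its own algebraic closure inside a cluster), there are infinitely many vertices of $E$ realizing $\tp(v_E/y)$, and by $\K$-saturation each extends to a realization of $\tp(v/C)$ attached at $y$. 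As $C\le M$ is strong, these realizations are $\Aut(M/C)$-conjugate, so $v$ has infinitely many conjugates over $C$ and hence $v\notin\acl(C)$.

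The main obstacle is this last paragraph: I must make precise that the type of an exterior vertex over $C$ is controlled entirely by the gate $y$, and then convert ``infinitely many realizations of $\tp(v/C)$'' into ``infinitely many $\Aut(M/C)$-conjugates.'' This requires establishing that $C=\conv_M(A)$ is strong in $M$ and invoking the homogeneity of the $\K$-saturated model over strong sets (Proposition~\ref{prop:K-saturated}); it also uses the computation that a single vertex is its own algebraic closure within a Farey cluster, which I would derive from the edge- and vertex-transitivity of $\Aut(G_F)$ together with the fact that a point stabiliser sends some neighbour to infinitely many positions. By contrast, the first two paragraphs are comparatively routine once convexity of clusters and the finiteness of geodesics are in hand.
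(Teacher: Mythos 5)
Your proof is correct and follows essentially the same route as the paper: the inclusion $\conv_M(A)\subseteq\acl(A)$ via finiteness of geodesics plus algebraicity of a cluster over any two of its vertices, and the reverse inclusion via the gate from Corollary~\ref{cor:gate} together with the fact that automorphisms fixing the hull move everything beyond the gate in infinite orbits. The one point you flag as delicate (homogeneity of the saturated model over the infinite convex hull, needed to turn ``infinitely many realizations'' into non-algebraicity) is present in the same form in the paper's own argument, which invokes transitivity of $\Aut(M/A)$ on the edges at the gate, so your treatment is no less complete than the original.
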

\begin{proof}
We may assume that $M$ is $\omega$-saturated, so without loss of generality $M=FG$. Note that $FG$ is countable, has countably many connected components and every connected component of $G_{tree}(M)$ is a regular tree with infinite valencies.

First suppose that $A=\{a\}$ is a single point. Since $\Aut(FG)$ permutes the connected components, clearly $\acl(a)$ is contained in the connected component of $a$.
Suppose there is some $x\in\acl(a), x\neq a$. Then by Lemma~\ref{lem:finitely_many_path}  there is an edge $e$ containing $a$ in $\acl(a)$ and hence $G_F(e)\subset \acl(a)$ contradicting the fact that  $\Aut(G_F)$ acts transitively on the infinitely many edges containing $a$.

Now suppose $|A|>1$. By the previous argument we may assume that $A$ is contained in a single connected component of $FG$.  Let $\conv(A)$ be the convex hull of $A$ in $G_{tree}(M)$. Since between any two elements of $A$, the set of shortest paths between them is in the algebraic closure of $A$ and each equivalence class of edges is algebraic in any single edge of the class, we see that $\conv(A)\subseteq\acl(A)$.

Now we claim that $\acl(A)\subseteq\conv(A)$. Let $x\notin\conv(A)$. Then by Corollary~\ref{cor:gate} there is a unique vertex $a\in\conv_M(A)$ such that every path from $x$ to $A$ passes through $x$. If $x\in\acl(A)$, then again by Lemma~\ref{lem:finitely_many_path} there is an edge $E(a,y)$ in $\acl(A)$ containing $a$ and not contained in $\conv(A)$. Hence the equivalence class of $E(a,y)$ is algebraic in $A$. However,  the subgroup of $\Aut(M)$ fixing $A$ acts transitively on the infinitely many edges containing $a$ in the Farey graph corresponding to $E(a,y)$. This is a contradiction.
\end{proof}

It is natural to expand the language of graphs to a language containing
binary predicates $d_n(x,y)$ expressing that $d(x,y)=n$. However, these
predicates are not enough to obtain quantifier elimination: Suppose $x$
has distance $n$ to $y$ and $y'$ where $y$ is in the same equivalence
class and $y'$ is in a different equivalence class. Then there is a
simple cycle containing $x,y$, but no simple cycle containing $x,y'$.

Furthermore, not all cycles of a given length have isometric triangulations.

It turns out that these are (essentially) the only obstruction to quantifier elimination:

\begin{definition}\label{def:min cycle} We say that a simple cycle $C$ is minimal if it has exactly two removable vertices. If $x,y$ are the removable vertices in $C$, then we say that $C$ is minimal for $x,y$.
\end{definition}

For $n\geq 1$, let $\C_n$ be the set of all  isomorphism types of minimal triangulated cycles $C$ in the Farey
Graph in which the removable vertices have distance $n$ in $C$ and
put $\C=\bigcup\C_n$.

For $C\in\C$ we let $P_C$ be the binary predicate expressing that $P_C(x, y)$ holds
if there exists a  triangulated cycle $C'$  isomorphic to $C$ and minimal for $x,y$.

For any sequence $\delta = (C_1, \dots C_m), C_i\in\C_{k_i}, i\leq m$,
we expand the language by binary predicates $P_\delta$ expressing that $P_\delta(x,y)$ holds if and only
if the following conditions hold:

\begin{enumerate}
    \item There exist connecting points $z_0=x,z_2, \dots, z_m=y$  such that
     $ P_{C_i}(z_{i-1}, z_i)$ holds for all $i=1,\ldots m$, and
    \item $d(x,y)=\sum_{i=1}^m k_i$.
\end{enumerate}

For $\delta$ the empty sequence we put $P_\d(x,y)$ if and only if $x=y$.

Note that by (2) we have $d(z_i,z_{i-1})=k_i$  for all $i=1,\ldots m$.

In other words, if $P_C(x,y)$ holds for some $C\in\C_k$, then  then $x,y$ lie in the same equivalence class and have distance $k$. However, there may exist smaller cycles $C_1,C_2\in \C$ such that also $P_{C_1,C_2}(x,y)$  holds. 

\begin{remark}\label{rem:unique connecting points}
Let $M$ be a model of $T$ and suppose that $m$ is minimal such that for some $\delta=(C_1,\ldots, C_m)$ we have $P_\d(a,b)$. Then the connecting points $z_0=a,\ldots, z_m=b$ are uniquely determined and are the vertices of type $M$ in $G_{tree}(M)$ on the shortest path from $a$  to $b$, i.e. $z_0,\ldots, z_m\in\dcl(a,b)$.
If $m$ is not necessarily minimal, then by Lemma~\ref{lem:finitely_many_path} the connecting points are in $\acl(a,b)$
\end{remark}

We also define ternary relation symbols $D_{n,\sigma}$ for $n\geq 1,\ \sigma\in Sym(|F_n|)$, in the following way:

For every $n\geq 1$, we fix an enumeration $v_1,\ldots, v_{|F_n|}$ of the Farey graph of level~$n$. 
Then $D_{n,\sigma}(x,y,z)$ holds if and only if there are $x_4,\ldots x_{|F_n|}$ such that the map
\[x\mapsto v_1, y\mapsto v_2, z\mapsto v_3\mbox{\  and \ }x_i\mapsto v_{\sigma(i)},\quad i=4,\ldots |F_n|,\] is an isometry.\footnote{We allow equality between the variables $x, y, z$ in which case the identification has to be adjusted accordingly.}

Note that if $D_{n,\sigma}(a,b,x)$ holds, then $x\in\acl(a,b)$ in any model of $T$.

We now define ternary predicates $Y_{\d_1,\d_2,\d_3,n,\sigma}$
for finite (possibly empty) sequences $\d_1,\d_2,\d_3$ in $\C$
where $Y_{\d_1,\d_2,\d_3,n,\sigma}(x,y,z)$ holds
if and only if
\begin{enumerate}

\item there exist witnesses $x',y',z'$ such that $D_{n,\sigma}(x',y',z')$ holds, and
\item $P_{\d_1}(x,x'), P_{\d_2}(y,y')$ and $P_{\d_3}(z,z')$ hold.
\end{enumerate}

We call $|\d_1|+|\d_2|+|\d_3|$ the \emph{total length} of $\epsilon$. If the total length of $\epsilon$ is zero, then $Y_\epsilon=D_{n,\sigma}$.

\begin{remark}\label{rem:Y algebraic}
Note that if $\epsilon=(\d_1,\d_2,\d_3,n,\sigma)$ has minimal total length such that $Y_\epsilon(x,a,b)$ holds for some witnesses $x',a',b'$, then by the uniqueness from Remark~\ref{rem:unique connecting points} and the algebraicity of $D_{n,\sigma}(x',a',b')$  we see that $x'$ is the gate for $x$ to $\acl(a,b)$.
In particular, if $\d_1=\emptyset$, then $x\in\acl(a,b)$.
 
\end{remark}

We let $L'$ denote the language of graphs expanded by the binary predicates $P_\delta$ and the ternary predicates $Y_\epsilon$ for all  $\epsilon=(\d_1,\d_2,\d_3,n,\sigma)$ where $\d,\d_1,\d_2,\d_3$ are finite sequences in $\C$.

\begin{theorem}\label{thm:QE}
$T$ has quantifier elimination in the language $L'$.
\end{theorem}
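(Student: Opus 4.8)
The plan is to verify quantifier elimination through the standard back-and-forth criterion. Since $L'$ is a relational language, every subset of a model is a substructure, so it suffices to prove the following extension property: for $\omega$-saturated (equivalently $\K$-saturated, by Proposition~\ref{prop:K-saturated}) models $M, M' \models T$, finite tuples $\bar a \in M$ and $\bar a' \in M'$ with the same quantifier-free $L'$-type, and any $c \in M$, there is $c' \in M'$ such that $\bar a c$ and $\bar a' c'$ again have the same quantifier-free $L'$-type. Realizing this extension property for the family of quantifier-free-type-preserving partial maps between $\omega$-saturated models is exactly the back-and-forth condition equivalent to QE.

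The conceptual core I would isolate as a lemma: the quantifier-free $L'$-type of a finite tuple $\bar a$ determines, and is determined by, the isomorphism type of the marked structure $(\conv_M(\bar a); \bar a)$, where $\conv_M(\bar a) = \acl(\bar a)$ by Proposition~\ref{prop:acl}. The forward direction is routine: every instance of $E$, $P_\delta$ and $Y_\epsilon$ on $\bar a$ is witnessed inside $\conv_M(\bar a)$ --- for $P_\delta$ the connecting points lie in $\acl(\bar a)$ by Remark~\ref{rem:unique connecting points}, and for $Y_\epsilon$ the gates together with the common Farey piece carrying the $D_{n,\sigma}$-witness lie in $\conv_M(\bar a)$ by Remark~\ref{rem:Y algebraic} and Corollary~\ref{cor:gate} --- so the truth of each predicate depends only on the isometry-and-tree type of the convex hull. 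The substance is the converse, reconstructing $(\conv_M(\bar a);\bar a)$ from the type: the $P_\delta$ predicates recover the shape of $G_{tree}(M)$ on $\conv(\bar a)$ (which points share an equivalence class, the relevant distances, and the branching encoded by the gates appearing in $Y_\epsilon$), while the indexing of $\C$ by full isomorphism types of minimal triangulated cycles --- rather than by length alone --- together with $D_{n,\sigma}$ recovers the isometry types of the individual Farey pieces with their marked vertices. This is precisely where the two obstructions noted before Definition~\ref{def:min cycle} (same class versus different class, and non-isometric triangulations of equal length) are absorbed.

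Granting the lemma, the back-and-forth step splits according to the position of $c$. If $c \in \conv_M(\bar a) = \acl(\bar a)$, the lemma supplies an $L'$-isomorphism $g : \conv_M(\bar a) \to \conv_M(\bar a')$ with $g(\bar a) = \bar a'$, and I take $c' = g(c)$; since $\conv_M(\bar a c) = \conv_M(\bar a)$ and the truth of each $L'$-predicate on a subtuple of $\bar a c$ is decided within this set, the extended types agree. If $c \notin \conv_M(\bar a)$, let $y \in \conv_M(\bar a)$ be the gate for $c$ from Corollary~\ref{cor:gate} and put $y' = g(y)$. Setting $A = \conv_M(\bar a)$ and $B = \conv_M(\bar a c)$, one has $A \leq B$, with $B$ obtained from $A$ by the branch from $y$ to $c$; transporting this branch through $g$ and invoking $\K$-saturation of $M'$ (Proposition~\ref{prop:K-saturated}) together with the amalgamation property (Proposition~\ref{prop:amalgamtion}) produces a copy of $B$ over $A' = g(A)$ inside $M'$, whose vertex corresponding to $c$ I take as $c'$. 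By construction $(\conv_{M'}(\bar a' c'); \bar a' c') \cong (\conv_M(\bar a c); \bar a c)$, so the lemma yields equality of the quantifier-free $L'$-types.

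The main obstacle is the converse direction of the lemma: showing that $P_\delta$ and $Y_\epsilon$ genuinely pin down the full isometry type of the tree of Farey graphs spanned by $\bar a$. One must argue that the connecting and gate points extracted from minimal-total-length instances are canonical (Remarks~\ref{rem:unique connecting points} and~\ref{rem:Y algebraic}), that they assemble into the correct tree $\conv(\bar a)$, and that the finitely many Farey pieces are recovered up to isometry with all marked points in place; the bookkeeping for three or more points sharing a piece, and for chains of pieces along a geodesic, is where the detailed verification concentrates. By comparison, the extension step should be routine once strongness $A \leq B$ for convex hulls is checked and the saturation-plus-amalgamation machinery is applied.
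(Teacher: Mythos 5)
Your overall strategy---reduce quantifier elimination to the claim that the quantifier-free $L'$-type of a tuple determines its position relative to its algebraic closure, then extend partial maps one element at a time using the gate from Corollary~\ref{cor:gate} and saturation---is the same strategy the paper follows, merely phrased as back-and-forth between two $\omega$-saturated models rather than as homogeneity within a single one. Your case split (new element inside $\conv_M(\bar a)=\acl(\bar a)$ versus outside, with the gate controlling the outside case) matches the paper's case analysis, and your identification of $\acl$ with $\conv_M$ via Proposition~\ref{prop:acl} is the right starting point.

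The gap is that your central lemma---that the predicates $P_\delta$ and $Y_\epsilon$ reconstruct the marked structure $(\conv_M(\bar a);\bar a)$ up to isomorphism---is essentially the whole content of the theorem, and you state it rather than prove it; you yourself flag its converse direction as "where the detailed verification concentrates." The paper's proof is, in effect, a proof of exactly that statement: it inducts on the minimal length $m$ of a sequence $\delta$ with $P_\delta(b,a)$, using the uniqueness of connecting points (Remark~\ref{rem:unique connecting points}), the identification of the gate from a minimal-total-length instance of $Y_\epsilon$ (Remark~\ref{rem:Y algebraic}), and---crucially---Corollary~\ref{cor:rigid}, which upgrades an isometry of finite triangulated pieces to a unique automorphism of the ambient Farey graph. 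That rigidity is what converts "same cycle types at the same connecting points" into "same marked isomorphism type of the Farey pieces," and your sketch never invokes it. A smaller but real problem: in the extension step you apply $\K$-saturation and Proposition~\ref{prop:amalgamtion} to $A=\conv_M(\bar a)$ and $B=\conv_M(\bar a c)$, but these sets are infinite (each equivalence class is a full copy of the Farey graph), so the finitary amalgamation machinery does not apply as stated; one must instead realize the finite branch from the gate $y'$ to $c'$ by saturation and then verify, again via Remark~\ref{rem:unique connecting points} and Corollary~\ref{cor:rigid}, that no unintended instances of $P_\delta$ or $Y_\epsilon$ relating $c'$ to $\bar a'$ are created.
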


We will show that every type is determined by its quantifier-free part.
We start with the following easy (and well-known) lemma:

\begin{lemma}\label{lem:stabilizer}
Let $X=\{x_1,x_2,x_3\}$ form a triangle in the Farey graph $G_F$. Then the pointwise stabilizer of $X$ in $Aut(G_F)$ is trivial and hence $Aut(G_F)$ acts regularly on the set of ordered triangles.
\end{lemma}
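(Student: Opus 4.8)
The plan is to prove the two assertions separately: first that any automorphism fixing the three vertices of a triangle is the identity, and then that $\Aut(G_F)$ is transitive on ordered triangles; a free transitive action is exactly a regular one. The main content, and the easy well-known part, is the stabilizer computation, which I would carry out by a purely local propagation argument.

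So let $\sigma\in\Aut(G_F)$ fix $x_1,x_2,x_3$ pointwise. The crucial step is the following propagation: if $\sigma$ fixes all three vertices of a triangle $T$ and $T'$ is a triangle sharing an edge $e$ with $T$, then $\sigma$ fixes all three vertices of $T'$. Indeed, the endpoints of $e$ are fixed, so $e$ is fixed as an edge; since every edge lies in \emph{exactly} two triangles, these are precisely $T$ and $T'$; as $\sigma$ permutes triangles and fixes $T$ setwise, it must fix $T'$ setwise, and since the endpoints of $e$ are already fixed, the apex of $T'$ is fixed too. Note that fixing only an edge is not enough here (the two apexes could be swapped); it is essential that a whole triangle is fixed, which is why the hypothesis is a triangle rather than an edge.

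To finish the first part I would show that the triangle-adjacency graph of $G_F$ (triangles as nodes, adjacency meaning a shared edge) is connected, so the propagation step reaches every triangle. This follows by induction from the construction $G_F=\bigcup_n F_n$: the two triangles of $F_1$ share the central edge, and in passing from $F_n$ to $F_{n+1}$ each newly created triangle $\{u,w,v_e\}$ shares the edge $e$ with a triangle already present in $F_n$. Hence every triangle of $G_F$ is joined to $\{x_1,x_2,x_3\}$ by a chain of edge-sharing triangles, and by repeatedly applying the propagation step $\sigma$ fixes every vertex lying in a triangle. Since every vertex of $G_F$ lies in a triangle, $\sigma=\mathrm{id}$, so the pointwise stabilizer of $X$ is trivial. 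For regularity it then remains to combine this with transitivity on ordered triangles: transitivity on edges is already noted, and I would upgrade it by realizing the symmetries of a single triangle as automorphisms (the permutations of the lozenge $F_1$ are isomorphisms of a strong subgraph, which extend to automorphisms by the homogeneity of $G_F$ with respect to strong embeddings); composing such local symmetries with an edge-transitive automorphism carries any ordered triangle to any other. Transitive plus free yields regular.

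The main obstacle I anticipate is not the stabilizer computation, which is entirely local and robust, but the transitivity/orientation half of the regularity claim: one must guarantee that both orientations of an edge and both choices of apex are achieved by automorphisms. This rests on the homogeneity of $G_F$ for strong embeddings (equivalently, on the regularity of the automorphism group of the Farey tessellation), and verifying that a triangle is indeed strong in $G_F$ and that the relevant partial isomorphisms extend is the point that requires the most care.
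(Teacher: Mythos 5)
Your proof is correct and follows essentially the same route as the paper: the trivial-stabilizer part is exactly the paper's argument (each edge lies in exactly two triangles, so fixing one triangle pointwise forces the adjacent one to be fixed, and one propagates inductively through the construction of $G_F$), and both treatments reduce transitivity on ordered triangles to the cited edge-transitivity. Your version merely spells out the connectedness of the triangle-adjacency graph and the realization of the triangle's symmetries, details the paper compresses into ``follows easily'' and ``follows inductively.''
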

\begin{proof}
Since $Aut(G_F)$ is transitive on edges, the transitivity on triangles follows easily.
    Since every edge is contained in exactly two triangles, fixing one of them implies that the other one is fixed as well. Now the claim follows inductively.
\end{proof}

\begin{corollary}\label{cor:rigid}
    Let $S_1, S_2$ be  finite connected triangulated subgraphs of the  Farey Graph without cut-points and without vertices of valency at most $1$. If $S_1, S_2$ are isometric, there is a unique automorphism of the Farey Graph taking $S_1$ to $S_2$.
\end{corollary}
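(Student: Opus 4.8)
The plan is to deduce everything from Lemma~\ref{lem:stabilizer} together with a rigidity statement for isometric embeddings. First let me fix the reading of the statement: since a single triangle already satisfies all the hypotheses but admits six self-isometries, the claim cannot be about set-wise images; the intended assertion is that \emph{for each isometry $\phi\colon S_1\to S_2$ there is a unique automorphism of $G_F$ restricting to $\phi$}. This is the exact generalization of the regular action on ordered triangles.

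Uniqueness is then immediate. Since $S_1$ is triangulated, without cut-points and of minimal valency $\geq 2$, it contains a triangle $T$: the minimal valency forces a cycle, and that cycle is triangulated inside $S_1$, hence contains a triangle. If $\alpha,\alpha'\in\Aut(G_F)$ both restrict to $\phi$, they agree on the ordered triangle $T$, so $\alpha=\alpha'$ by Lemma~\ref{lem:stabilizer}.

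For existence I would pick such a triangle $T\subseteq S_1$ and use Lemma~\ref{lem:stabilizer} to choose the unique $\alpha\in\Aut(G_F)$ with $\alpha|_{T}=\phi|_{T}$. It then remains to prove the key rigidity claim: \emph{two isometric embeddings $\psi_1,\psi_2\colon S_1\to G_F$ that agree on $T$ coincide}; applying it to $\psi_1=\alpha|_{S_1}$ and $\psi_2=\phi$ yields $\alpha|_{S_1}=\phi$ and in particular $\alpha(S_1)=S_2$. I would prove this claim by a triangle-by-triangle induction. The structural input is that the triangles of $S_1$ can be enumerated $T=T_0,T_1,\ldots,T_r$ so that each $T_i$ with $i\geq 1$ shares an edge with $T_0\cup\cdots\cup T_{i-1}$. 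Granting this, assume $\psi_1,\psi_2$ agree on $T_0\cup\cdots\cup T_i$, let $e=\{p,q\}$ be the edge along which $T_{i+1}$ is attached and $w$ its remaining vertex. Each $\psi_k(T_{i+1})$ is a triangle of $G_F$ on the common edge $\{\psi_1(p),\psi_1(q)\}=\{\psi_2(p),\psi_2(q)\}$, distinct from the already placed common image of the neighbouring triangle (distinctness by injectivity, since $T_{i+1}$ and that triangle differ as vertex sets). As every edge of $G_F$ lies in exactly two triangles, $\psi_1(T_{i+1})=\psi_2(T_{i+1})$, hence $\psi_1(w)=\psi_2(w)$, and the induction closes to give $\psi_1=\psi_2$ on all of $S_1$.

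The main obstacle is establishing the enumeration of triangles, i.e. that the triangles of $S_1$ form an edge-adjacency-connected family; this is where the two so-far-unused hypotheses enter. Being without cut-points, $S_1$ is $2$-connected, so any two of its vertices---in particular a vertex of $T_i$ and one of $T_j$---lie on a common cycle; being triangulated, this cycle is triangulated within $S_1$ (cf.\ Remark~\ref{rem:triangulated}), and its triangles form a chain of edge-adjacent triangles of $S_1$ joining $T_i$ to $T_j$. Connectivity of the edge-adjacency relation then yields the ordering by a spanning-tree argument. I expect the delicate point to be making precise the notion of a \emph{triangulated subgraph} and verifying that the triangulating chords can be taken \emph{inside} $S_1$ (so that the connecting chain of triangles really lies in $S_1$, not merely in the ambient Farey graph); once this is pinned down, the forcing step above is routine.
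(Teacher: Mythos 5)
Your proof is correct and follows essentially the same route as the paper: anchor at a single triangle of $S_1$ (the paper takes the triangle attached to a removable vertex, you extract one from a triangulated cycle), invoke Lemma~\ref{lem:stabilizer} to get existence and uniqueness of the automorphism on that triangle, and propagate across $S_1$ using the fact that every edge lies in exactly two triangles. The paper compresses the propagation step into a single sentence, whereas you correctly make explicit the two points it leaves implicit --- that uniqueness must be read relative to a given isometry $\phi\colon S_1\to S_2$ (a lone triangle has six such), and that the triangles of $S_1$ need to be shown connected under edge-adjacency for the induction to reach all of $S_1$.
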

\begin{proof}
By Lemma~\ref{lem:stabilizer} there is a unique automorphism of the Farey Graph taking the triangle corresponding to a removable vertex in $S_1$ to that in $S_2$. Since every edge is contained in exactly two triangles, this automorphism takes $S_1$ to $S_2$.
\end{proof}
If $S_1, S_2$ have cut-points, the proof shows that there is at most one such automorphism taking $S_1$ to $S_2$.

We now prove Theorem~\ref{thm:QE}:
\begin{proof}
Let $M$ be an $\omega$-saturated model of $T$, $A\subset M$ and $b\in M$. It is enough to show that the quantifier-free type of $\tp_{qf}(b/A)$ in the language $L'$ implies $\tp(b/A)$.

For this we show the following: if  $\tp_{qf}(b/A)=\tp_{qf}(b'/A)$ for some $b'\in M$, then there is an automorphism of $M$ fixing $A$ pointwise and taking $b$ to $b'$.

If $\tp_{qf}(b/A)$ does not contain any formula of the form $P_\d(x,a)$, then no element of $A$ lies in the same connected component as $b$ and $b'$. By homogeneity of the Farey graph and the classification of models of $T$ there is an automorphism pointwise fixing all connected components of $M$ containing an element of $A$ and taking $b$ to~$b'$.

Now suppose that $b\in\acl(A)$. Since $\acl(A)=\conv_M(A)$,  we see that $\tp_{qf}(b/A)$ contains $Y_\epsilon(a,a',x)$ for some $a,a'\in A$ and $\epsilon=(\d_1,\d_2,\emptyset,n,\sigma)$ of minimal total length. Let  $m$ be minimal such that there is $\d$ of length $m$ with $P_\d(a,a')$. Then $\d=\d_1\circ(C)\circ\d_2$ for some $C\in\C$ where $\circ$ denotes concatenation of sequences. Let $z_0=a,\ldots, z_m=a'$ be the connecting points for $P_\d(a,a')$. These are unique
by Remark~\ref{rem:unique connecting points}. Then $b,b'$ lie in the same Farey graph vertex of $G_{tree}(M)$ 
as $z_{|\d_1|}, z_{|\d_1|+1}$. By Corollary~\ref{cor:rigid} there is an automorphism fixing $A$ pointwise and taking $b$ to~$b'$.

If $b\notin\acl(A)$ and  $a\in A$ is the gate for $b$ to $\acl(A)$, then $\tp_{qf}(b/A)$ contains a formula $P_\d(x,a)$ for some $\d=(C_1,\ldots, C_m)$ with $m$ minimal. By Remark~\ref{rem:Y algebraic} $\tp_{qf}(b/A)$ contains no formula  $Y_\epsilon(x,a,a')$ for $a,a'\in A$ where $\epsilon=(\d_1,\d_2,\emptyset,n,\sigma)$ for sequences $\d_1,\d_2$ in $\C$.

We now do induction on $m$. If $m=1$, then $P_C(x,a)$ holds for some $c\in\C$. Since  $\tp_{qf}(b/A)$ does not contain any formula of the form $Y_\epsilon(a,c,x)$ for $a,c\in A$, this implies that $b$ and $b'$ lie in an equivalence class at distance $1$ from $a$ in $G_{tree}(M)$. There is an automorphism fixing $A$ pointwise and taking the equivalance class containing $b$ to the one containing $b'$. Now the claim follows from Corollary~\ref{cor:rigid}.

Suppose the claim has been proved for all $m'<m$ and suppose $P_\d(b,a)$ has connecting points $a=z_0,\ldots, z_m=b$ and $P_\d(b',a)$ has connecting points $a=z'_0,\ldots, z'_m=b'$. Then by induction assumption there is an automorphism fixing $A$ pointwise and  taking  $z_0,\ldots, z_{m_1}$ for $P_\d(b,a)$ to  $z'_0,\ldots, z'_{m_1}$. Let $b''$ be the image of $b$. Again by the induction assumption for $m=1$ we can compose this with the automorphism fixing $M\cup\{z_0',\ldots, z_{m-1}'\}$ pointwise and taking $b''$ to $b'$.

Finally suppose that the gate for $b$ in $\acl(A)$ is in $\acl(A)\setminus A$. Then there is $\epsilon=(\d_1,\d_2,\d_3,n,\sigma)$ of minimal total length such that $Y_\epsilon(a,c,b)$ with $a,c\in A$ and witnesses  $a',c',\tilde{b}$. Then $\tilde{b}\in\acl(a,c)$ is the gate for $b$ to $\acl(A)$. Let $\tilde{b'}$ be such that $a',c',\tilde{b'}$ witness $Y_\epsilon(a,c,b')$. By the first part of the argument, there is an automorphism fixing $A$ pointwise and taking $\tilde{b}$ to $\tilde{b'}$ and $b$ to some $b''$.
Since $P_{\d_3}(b'',\tilde{b'})$, we can now apply the argument from the previous paragraph to find an automorphism fixing $A\cup\{\tilde{b'}\}$
and taking $b''$ to $b'$.

\end{proof}
   \section{$\omega$-Stability and Morley Rank}

As mentioned above,   $\omega$-stability was proved in a much more general setting in~\cite{modeltheorycurvegraph}. They also mention
    that the Morley rank of the Farey Graph is at least $\omega$ and the set of vertices at distance $n$ from a vertex $a$ has Morley rank at least $n$. Having quantifier elimination in  the languate $L'$ now allows us to establish:
    
 \begin{definition}
Let $\d,\d'$ be finite sequences in $\C$. We say that $\d'$  \emph{embeds } into $\d$ if whenever $P_\d(x,y)$ holds, then so does $P_{\d'}(x,y)$.
\end{definition}
Clearly, if $\d'$ embeds into $\d$, then $|\d|\leq|\d'|$.

 \begin{theorem}\label{thm:omega}
     The Morley rank of the theory of the Farey graph is $\omega$.
 \end{theorem}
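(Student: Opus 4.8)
The plan is to compute the Morley rank of $T$ by showing that $\mathrm{RM}(T)=\omega$ via two matching bounds, using the quantifier elimination of Theorem~\ref{thm:QE} to reduce everything to the combinatorics of the predicates $P_\delta$ and $Y_\epsilon$. The lower bound $\mathrm{RM}(T)\geq\omega$ is essentially already available: the set $S_n(a)=\{x: d(x,a)=n\}$ of vertices at distance $n$ from a fixed vertex $a$ has Morley rank at least $n$, as noted (following \cite{modeltheorycurvegraph}) at the start of this section, so $\mathrm{RM}(T)\geq n$ for all $n$ and hence $\mathrm{RM}(T)\geq\omega$. The work therefore lies in the upper bound: I would show that the Morley rank of the home sort is at most $\omega$, i.e. that no definable set of vertices has rank $\geq\omega+1$, which by the additivity and monotonicity of Morley rank reduces to bounding the rank of the types over a fixed algebraically closed base.

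First I would use quantifier elimination to describe the complete $1$-types over a small model (or an algebraically closed set) $A$. By Theorem~\ref{thm:QE}, the type of a single vertex $b$ over $A$ is determined by the quantifier-free $L'$-formulas it satisfies, which by Remark~\ref{rem:Y algebraic} and Proposition~\ref{prop:acl} amount to specifying the gate $a\in\acl(A)=\conv_M(A)$ of $b$ to $\acl(A)$ together with a sequence $\delta=(C_1,\ldots,C_m)$ in $\C$ of \emph{minimal length} witnessing $P_\delta(b,a)$. Thus the non-algebraic $1$-types over $A$ are organized by the integer $m$, the graph-distance $d(b,a)=\sum k_i$ of $b$ from its gate. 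The key ranking principle I would prove is that the Morley rank of a type is controlled by this distance $m$: specifically, I would show by induction on $m$ that the set of $b$ with gate $a$ and $P_\delta(b,a)$ for some minimal $\delta$ of length $m$ has Morley rank exactly $m$. The base case $m=0$ is algebraicity (rank $0$ by Proposition~\ref{prop:acl}), and for the inductive step I would use the gate structure from Corollary~\ref{cor:gate}: realizing one more ``step'' away from $a$ — moving from the connecting point $z_{m-1}$ to a fresh equivalence class containing $b$ — contributes rank exactly $1$, matching the fact that the set of vertices at distance $1$ from a fixed vertex in a single Farey graph has Morley rank $1$ (it is a single $\omega$-stable Farey component, whose local $1$-type space is ranked by the minimal-cycle data $\C_1$). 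The embedding preorder on sequences $\delta$ in $\C$ introduced just before the statement is the bookkeeping device that lets me pass to minimal $\delta$ and thereby make the distance $m$ well-defined and additive.

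The main obstacle, and the step I expect to require the most care, is establishing that each unit step contributes rank exactly $1$ rather than more, i.e. that the ``branching'' at each equivalence class is rank-$1$ and not rank-$\omega$ all at once. The subtlety is that at the gate $a$ there are infinitely many edges and infinitely many minimal cycle-types $C\in\C$ attached, so a priori the fiber of possible next connecting points $z_1$ could carry large rank; I must show that the family of isomorphism types $\{P_C(x,a):C\in\C\}$ stratifies the rank-$1$ set into pieces each of rank $0$ or $1$, with the generic type having rank exactly $1$ and degree controlled by the combinatorics of $\C_k$. Concretely I would argue that for fixed $k$, the set of $x$ at distance $k$ from $a$ lying in a fixed equivalence class $E$ is in definable bijection (using $D_{n,\sigma}$) with the distance-$k$ sphere inside a single Farey graph $F$, and that within one Farey component this sphere has Morley rank exactly $k$; summing the one-step increments via the additivity of rank along the tree $G_{tree}(M)$ then yields rank $m$ for distance $m$. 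Since $m$ ranges over all of $\mathbb{N}$ but each individual type has finite rank $m$, the supremum of ranks of $1$-types over $A$ is $\omega$, giving $\mathrm{RM}(T)=\omega$.

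\begin{proof}
By the remarks at the start of this section, the set of vertices at distance $n$ from a fixed vertex has Morley rank at least $n$, so $\mathrm{RM}(T)\geq n$ for every $n$ and hence $\mathrm{RM}(T)\geq\omega$.

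For the reverse inequality, let $M$ be an $\omega$-saturated model of $T$ and let $A=\acl(A)\subseteq M$ be algebraically closed. By Theorem~\ref{thm:QE} together with Remark~\ref{rem:Y algebraic} and Proposition~\ref{prop:acl}, every non-algebraic complete $1$-type $p\in S_1(A)$ determines a gate $a\in A=\conv_M(A)$ and a sequence $\delta=(C_1,\ldots,C_m)$ in $\C$ of minimal length with $P_\delta(x,a)\in p$; we set $d(p)=m=d(b,a)$ for any realization $b\models p$. We claim that the $L'$-definable set
\[
X_{a,m}=\{x\in M: d(x,a)=m,\ a\text{ is the gate for }x\text{ to }A\}
\]
has $\mathrm{RM}(X_{a,m})=m$, by induction on $m$. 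For $m=0$ this set is $\{a\}$, of rank $0$. For the inductive step, Corollary~\ref{cor:gate} provides for each $x\in X_{a,m}$ a unique connecting point $z_1\in\dcl(a,x)$ with $P_{C_1}(z_1,a)$ and $d(z_1,a)=k_1$, and the fibre of $x\mapsto z_1$ is $X_{z_1,m-1}$ computed over $A\cup\{z_1\}$. The set of possible $z_1$ lies in a single equivalence class at distance $k_1$ from $a$; via the predicates $D_{n,\sigma}$ this set is in definable bijection with the distance-$k_1$ sphere inside a single copy of the Farey graph, whose Morley rank is $k_1$. By additivity of Morley rank along the fibration $x\mapsto z_1$, and the induction hypothesis $\mathrm{RM}(X_{z_1,m-1})=m-1$,
\[
\mathrm{RM}(X_{a,m})=k_1+(m-k_1)=m.
\]
Thus every $1$-type over $A$ has finite Morley rank equal to its gate-distance $m\in\mathbb{N}$, so $\mathrm{RM}(S_1(A))=\sup_m m=\omega$. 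By monotonicity and additivity of Morley rank this gives $\mathrm{RM}(T)\leq\omega$, and combined with the lower bound, $\mathrm{RM}(T)=\omega$.
\end{proof}
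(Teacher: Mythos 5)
Your overall architecture (lower bound from distance spheres; upper bound by ranking $1$-types via the gate and the predicates $P_\delta$) is the same as the paper's, but the proof as written assumes exactly the step that carries all the difficulty. The assertion that ``the distance-$k_1$ sphere inside a single copy of the Farey graph has Morley rank $k_1$,'' used to rank the set of candidate connecting points $z_1$, is (a piece of) the conclusion, not an available input: the technical heart of the paper's proof is showing that $\mathrm{RM}(P_\delta(x,a))=|\delta|$ and in particular that $P_C(x,a)$ is \emph{strongly minimal} for every single $C\in\C_k$, even when $k>1$. That requires Lemma~\ref{lem:finitely many cycles} (finitely many simple $k$-cycles on an edge), Lemma~\ref{lem:str min} (algebraicity of $P_C(x,a)\wedge P_{C'}(x,b)$), and a careful case analysis of how a second formula $P_{\delta'}(x,b)$ can meet $P_\delta(x,a)$: splitting $\delta'$ at the gate, the embedding relation on sequences, and counting the copies of $C$ that meet one of the finitely many geodesics from $a$ to $b$. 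None of this is present in your argument, and without it the claim that each ``step'' contributes rank exactly $1$ is unsupported.

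There are two further genuine errors. First, you conflate $m=|\delta|$ with the graph distance $d(b,a)=\sum_i k_i$; the rank of a type is the minimal \emph{length} of $\delta$, not the distance (if $a,b$ are the two removable vertices of a single minimal cycle $C\in\C_k$ with $k\geq 2$, then $P_C(x,a)$ is strongly minimal and the type has rank $1$, not $k$). Your additivity computation inherits this confusion and is inconsistent with its own data: a base of rank $k_1$ with fibres $X_{z_1,m-1}$ of rank $m-1$ gives, by Lascar's inequality, an upper bound of $(m-1)+k_1$, which exceeds $m$ whenever $k_1\geq 2$; the computation only closes because the set of connecting points $z_1$ with $P_{C_1}(z_1,a)$ in fact has rank $1$, which is again the strong minimality you did not prove. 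Second, your claim that every non-algebraic $1$-type over $A$ has a gate in $\acl(A)$, hence finite rank, omits the type of an element whose connected component misses $A$; that generic type has rank $\omega$, and indeed some type must (a formula of Morley rank $\omega$ and finite degree is realized by a type of rank $\omega$), so the sentence ``every $1$-type over $A$ has finite Morley rank \ldots\ so $\mathrm{RM}=\omega$'' is internally contradictory. The correct route to the upper bound, as in the paper, is via quantifier elimination: every definable set either contains the generic type or is covered by finitely many sets $P_\delta(x,a_i)$, each of finite rank $|\delta|$.
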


We start with the following easy lemma:

\begin{lemma}\label{lem:finitely many cycles}
Let $M$ be a model of $T$ and let $(x,y)$ be an edge, $3\leq k<\omega$. There are finitely many simple $k$-cycles  containing $(x,y)$.
\end{lemma}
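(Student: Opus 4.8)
The plan is to argue by induction on $k$, using the structural facts already established about models of $T$. The key observation is that any simple $k$-cycle containing the edge $(x,y)$ must, by Remark~\ref{rem:triangulated}, be triangulated, since every vertex of a simple cycle in a model of $T$ has valency $2$ within that cycle and hence the cycle is filled in by triangles. A triangulated $k$-cycle containing $(x,y)$ is built up from the edge $(x,y)$ by successively attaching triangles, so I would track how many ways this can happen. First I would handle the base case $k=3$: a simple $3$-cycle containing $(x,y)$ is just a triangle on $(x,y)$, and since every edge lies in exactly two triangles there are at most two of these.

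For the inductive step I would use the fact, established in Lemma~\ref{lem:finitely_many_path}, that between any two vertices there are only finitely many geodesics; more directly, I would exploit that every edge is contained in exactly two triangles. Given a simple $k$-cycle $C$ through $(x,y)$, consider the triangle in $C$ sitting on the edge $(x,y)$: its third vertex $z$ is one of the (at most two) neighbours completing a triangle on $(x,y)$. Removing the vertex $x$ (or the edge $(x,y)$) and replacing it by the edge $(z,y)$, or alternatively viewing $C$ as a $(k-1)$-cycle through a neighbouring edge together with one extra triangle, reduces the count for $k$ to finitely many instances of the count for smaller cycles. Concretely, I would show that each simple $k$-cycle through $(x,y)$ determines, via its triangulation, a simple $(k-1)$-cycle through an adjacent edge, and that only finitely many $k$-cycles map to each such $(k-1)$-cycle because of the ``exactly two triangles per edge'' condition.

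The main obstacle I anticipate is bookkeeping the triangulation correctly: a single $k$-cycle can be triangulated in several ways, and I must make sure I am counting simple cycles (unordered or ordered as appropriate) and not overcounting or conflating a cycle with its triangulation. The cleanest route may be to bound the number of triangulated disks with boundary a given $k$-cycle and then note that the boundary itself is what we are enumerating. I would phrase the induction so that the quantity being bounded is the number of simple $k$-cycles through a \emph{fixed} edge, and reduce to the same quantity for $(k-1)$ by peeling off one boundary triangle; since at the edge $(x,y)$ there are at most two triangles, and each choice feeds into a finite count by induction, the total remains finite.

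An alternative and perhaps shorter argument uses algebraicity directly: any simple $k$-cycle through $(x,y)$ is a finite triangulated configuration, and by the discussion following Lemma~\ref{lem:finitely_many_path} together with Lemma~\ref{lem: 2removables}, every vertex of such a cycle lies in $\acl(x,y)$. Since the cycle has bounded size $k$ and all its vertices are algebraic over the edge $(x,y)$, and each of the finitely many algebraic points has only finitely many conjugates, there can be only finitely many such cycles. I would present the inductive triangulation argument as the main proof, since it is elementary and self-contained, but mention the algebraic-closure argument as a remark if space permits.
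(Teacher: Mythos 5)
Your main argument follows the same strategy as the paper's proof: induct on $k$, use the base case $k=3$ from ``every edge lies in exactly two triangles,'' and reduce a $k$-cycle to a shorter cycle by peeling off a triangle of its (forced) triangulation, with finite fibres coming again from the two-triangles condition. However, the specific reduction you put forward first has a gap: if you peel off the triangle of the triangulation sitting on $(x,y)$, its apex $z$ need \emph{not} be a cycle-neighbour of $x$ or $y$, so you do not obtain a single simple $(k-1)$-cycle through an adjacent edge. Instead the $k$-cycle splits into two shorter cycles, one through $(x,z)$ and one through $(y,z)$. This can be repaired with strong induction on both pieces (together with the at most two choices of $z$), but as written the step ``each simple $k$-cycle determines a simple $(k-1)$-cycle through an adjacent edge'' is false. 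The paper avoids this by peeling an \emph{ear} instead: by Lemma~\ref{lem: 2removables} the induced subgraph on the cycle has at least two removable vertices, and $x$ and $y$ cannot both be removable (each is a neighbour of the other, so if one is removable the other has valency at least $3$ in the induced subgraph); hence there is a removable vertex $v\notin\{x,y\}$, and deleting $v$ yields a simple $(k-1)$-cycle \emph{still containing} $(x,y)$. The induction then stays on the same quantity, and the pigeonhole at the end (infinitely many $k$-cycles over one $(k-1)$-cycle would force some edge of the shorter cycle into infinitely many triangles) closes the argument. You should also state this pigeonhole explicitly; ``each choice feeds into a finite count by induction'' is the wrong direction --- the point is that each $(k-1)$-cycle has only finitely many preimages.

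Your proposed alternative via algebraic closure does not work and should be dropped rather than mentioned as a remark: in the Farey graph $\acl(x,y)$ is the entire connected component (as the corollary after Lemma~\ref{lem:finitely_many_path} records), so knowing that every vertex of every such cycle lies in $\acl(x,y)$ gives no finiteness whatsoever. To extract finiteness from algebraicity you would need a single algebraic formula satisfied by all vertices of all simple $k$-cycles through $(x,y)$, and producing such a formula is essentially equivalent to the lemma itself; indeed the paper uses this lemma as an ingredient in proving algebraicity statements (Lemma~\ref{lem:str min}), not the other way around.
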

\begin{proof}
Clearly the claim holds for $k=3$.
Now suppose $k\geq 4$ is minimal such that there are infinitely many simple $k$-cycles  containing $(x,y)$. Since every cycle has at least two removable vertices and $x$ and $y$ cannot both be removable, in each cycle there is a removable vertex different from $x, y$. Removing it yields a simple cycle of length $k-1$ containing $(x,y)$. Thus there is a $k-1$-cycle containing $(x,y)$ which arises in this way from infinitely many $k$-cycles. Since every edge is contained in exactly two triangles, this is impossible.
\end{proof}

\begin{lemma}\label{lem:str min}
Let $a,b\in G_F, k=d(a,b),$ and $C\in \C_n, C'\in\C_m$. Then the formula $P_C(x,a)\wedge P_{C'}(x,b)$ is algebraic.
\end{lemma}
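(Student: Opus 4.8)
The plan is to show that the formula $P_C(x,a)\wedge P_{C'}(x,b)$ has only finitely many solutions $x$ in any model $M\models T$ by locating every such $x$ inside $\acl(a,b)$. Since $M$ is determined up to elementary equivalence, I may assume $M=FG$, and by Proposition~\ref{prop:acl} we have $\acl(a,b)=\conv_M(a,b)$, which is a finite set. So the whole task reduces to proving that any witness $x$ of the conjunction lies in $\conv_M(a,b)$.

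First I would unpack the geometric meaning of $P_C(x,a)$: by the definition of $P_C$ together with Remark~\ref{rem:unique connecting points} (for $m=1$), $P_C(x,a)$ forces $x$ and $a$ to lie in the \emph{same} equivalence class of edges, i.e. the same Farey-graph vertex of $G_{tree}(M)$, and moreover $d(x,a)=n$. Likewise $P_{C'}(x,b)$ forces $x$ and $b$ to lie in the same equivalence class and $d(x,b)=m$. The key consequence is that $x$ is adjacent in $G_{tree}(M)$ both to the equivalence class $E_a$ of $a$ and to the equivalence class $E_b$ of $b$. Now I would consider the convex hull $\conv(a,b)$ in the forest $G_{tree}(M)$ (Lemma on $G_{tree}$ being a forest). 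Since $x$ is joined to both $E_a$ and $E_b$ in $G_{tree}(M)$, the vertex $x$ together with the unique path from $E_a$ to $E_b$ would create a cycle unless $x$ already lies on that path; because $G_{tree}(M)$ is a forest, $x$ must therefore lie on the geodesic in $G_{tree}(M)$ between $E_a$ and $E_b$, hence $x\in\conv(a,b)\subseteq\conv_M(a,b)=\acl(a,b)$.

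Having placed $x$ in the finite set $\acl(a,b)$, the formula defines a subset of the finite algebraic closure and is therefore algebraic. I would phrase this cleanly: the set of realizations of $P_C(x,a)\wedge P_{C'}(x,b)$ is contained in $\conv_M(a,b)$, which is finite by Proposition~\ref{prop:acl}, so the formula is algebraic. One should also dispose of the degenerate cases: if $E_a=E_b$ (so $a,b$ lie in a common equivalence class) then all candidate $x$ lie in that single finite-distance region of one Farey graph and finiteness follows from Lemma~\ref{lem:finitely_many_path} bounding geodesics; and if $a,b$ lie in different connected components of $M$ then no $x$ can be simultaneously at finite distance from both and the formula is inconsistent, hence trivially algebraic.

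The main obstacle I anticipate is the bookkeeping around the case $E_a=E_b$, where $G_{tree}(M)$ does not immediately separate $x$ from the path: here $x$ need not lie strictly between $E_a$ and $E_b$ in the tree, so I cannot invoke the forest argument directly and must instead argue inside a single copy of the Farey graph. In that situation I would appeal to the fact that within one Farey graph the constraints $d(x,a)=n$ and $d(x,b)=m$, together with $x$ being in the same equivalence class as $a$ and $b$, confine $x$ to a bounded region, and then use Lemma~\ref{lem:finitely_many_path} (finiteness of geodesics) or Lemma~\ref{lem:finitely many cycles} (finiteness of $k$-cycles through an edge) to conclude that only finitely many such $x$ exist. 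Making this reduction precise, and checking that the minimality/triangulation data encoded in $C$ and $C'$ do not introduce extra infinite families, is where the care is needed; everything else is a direct application of the algebraic-closure computation.
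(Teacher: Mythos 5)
There is a genuine gap, and it sits exactly where the real content of the lemma lives. Your central reduction rests on the claim that $\acl(a,b)=\conv_M(a,b)$ ``is a finite set,'' but this is false: by Definition~\ref{def:convex}, $\conv_M(a,b)$ contains \emph{every} vertex of each equivalence class lying in $\conv(a,b)$, and each such class spans an entire (infinite) copy of the Farey graph. Indeed the paper's own corollary to Lemma~\ref{lem:finitely_many_path} states that the algebraic closure of two vertices of the Farey graph is the whole graph. So ``the realizations lie in $\conv_M(a,b)$, which is finite, hence the formula is algebraic'' does not go through. Your localization step itself (via Lemma~\ref{cluster_lemma} and the forest structure of $G_{tree}(M)$, any realization $x$ lies in $\conv_M(a,b)$) is correct, and it \emph{could} be turned into a proof by a different final step: in an $\omega$-saturated model a formula all of whose realizations lie in $\acl$ of its parameters is algebraic, by a standard compactness argument. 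But you do not invoke that fact; instead you fall back on finiteness, which fails.

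The same problem defeats your treatment of the main case $E_a=E_b$, where $a$, $b$ and all candidate $x$ lie in a single copy of the Farey graph. You propose to ``confine $x$ to a bounded region'' and then cite Lemma~\ref{lem:finitely_many_path} or Lemma~\ref{lem:finitely many cycles}; but every vertex of the Farey graph has infinite valency, so the sphere $\{x: d(x,a)=n\}$ is infinite and a bounded region gives no finiteness. The missing combinatorial argument is the one the paper actually makes: a realization $x$ determines a minimal cycle of type $C$ through $x,a$ and one of type $C'$ through $x,b$, and these two cycles intersect; there are only finitely many geodesics from $a$ to $b$ (Lemma~\ref{lem:finitely_many_path}) and only finitely many cycles of type $C$ or $C'$ through any edge of one of them (Lemma~\ref{lem:finitely many cycles}); and any intersecting pair avoiding all these geodesics produces, together with a geodesic, a simple cycle of length at most $k+m+n$ through a geodesic edge, of which there are again only finitely many. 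Without this counting (or the saturation argument above), the lemma is not proved.
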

\begin{proof}
By Lemma~\ref{lem:finitely_many_path} there are only finitely many geodesics from $a$ to $b$. By Lemma~\ref{lem:finitely many cycles} there are only finitely many cycles of type $C$ and type $C'$ containing any edge of one of these paths from $a$ to $b$. Now consider intersecting cycles of type $C$ containing $a$ and of type $C'$ containing $b$ and not containing any edge from any of the paths from $a$ to $b$. Since they intersect at distance $\leq n$ from $a$ and distance $\leq m$ from $b$, these cycle segments together with one of the geodesics from $a$ to $b$ form a cycle of length at most $k+m+n$. Again there are at most finitely many altogether. Applying Lemma~\ref{lem:finitely many cycles} again shows that there are only finitely many cycles of type $C$ at $a$ and of type $C'$ at $b$ which intersect, and hence the formula is algebraic.

\end{proof}

 \begin{proof}[Proof of Theorem~\ref{thm:omega}]

By quantifier elimination in the language $L'$, the definition of the predicates $Y_{\delta_1, \delta_2, \delta_3, n, \sigma}$ as a conjunction of instances of formulas $P_\delta$ and Remark~\ref{rem:Y algebraic} it is enough to prove that for any $a\in M$ the Morley rank of     $P_\delta(x,a)$ is exactly $m=|\delta|$. 

    Let $M$ be an $\omega$-saturated model of $T$, so $M$ has infinitely many connected components and every vertex in $G_{tree}(M)$ has infinite valency.

The proof is by 
 induction on $m$.  

\bigskip

Let $m=1$ and  let $F_i, i<\omega$, be  neighbours of $a$ in $G_{tree}(M)$. Since all $F_i$ are isomorphic to the Farey graph, each contains a vertex $b_i$ such that $P_\d(a, b_i)$ holds, hence  $RM(P_\d(a, x)) ) \geq 1$ for all $a\in M$.

We claim that $P_\d(x,a)$ is strongly minimal. It suffices to prove that for all sequences $\d'$ and all $b\in M$ the formula $P_{\d'}(x,b)\wedge P_\d(x,a)$ is either algebraic or cofinite in $P_\d(x,a)$.

So let $|\d'|=m'$ be minimal such that $P_\d(a,x)\wedge P_{\d'}(x,b)$ holds for infinitely many $x$. Then $m'\geq 2$ by Lemma~\ref{lem:str min}.
If $a, b$ lie in a different equivalence classes, then by Remark~\ref{rem:unique connecting points} the set of solutions for $P_\d(a,x)\wedge P_{\d'}(x,b)$ is the same as the set of solutions $P_\d(a,x)\wedge P_{\d''}(x,c)$ where $c$ is the gate for $b$ to the equivalence class of $a$ and $\d''$ is a proper end segment of $\d'$. This contradicts the minimality of $m'$. So $a, b$ lie in the same Farey graph.

If $\d'=\d_1\circ\d_2$, $P_{\d_1}(a,b)$ and $\d_2$ embeds into $\d$, then clearly $P_\d(x,a)\wedge P_{\d'}(x,b)$ is confinite $P_\d(x,a)$. 

If $P_{\d_1}(a,b)$ and $\d_2$ does not embed into $\d$, then $P_{\d_2}(x,a)\wedge P_\d(x,a)$ is finite by minimality of $m'$. 

Hence we now assume we cannot write $\d'=\d_1\circ\d_2$ such that $P_{\d_1}(a,b)$ holds.

Consider the finitely many geodesics from $a$ to $b$ and write $\d'=(C)\circ\d''$. 
There are at most  finitely many copies of $\d$ and of $C$ intersecting one of these geodesics by Lemma~\ref{lem:finitely many cycles}. 
If all copies of $C$ intersect one of these geodesics, we could replace $P_{\d'}(x,b)$ by $P_{\d''}(x,c)$ where $c$ is the corresponding connecting point, contradicting minimality of $m'$.
Hence infinitely many copies of $C$ do not intersect any of these geodesics.
Any realization of $P_\d(a,x)\wedge P_{\d'}(x,b)$ arises from a unique copy of $\d$ and a unique copy of $\d'$. Since these copies form a simple cycle of bounded length containing a geodesic from $a$ to $b$, there are only finitely many copies of $C$ involved. Hence we can again replace $P_{\d'}(x,b)$ by $P_{\d''}(x,c)$ where $c$ is the corresponding connecting point, contradicting minimality of $m'$.

Now suppose the claim is proven for all $m'<m$ and suppose $\d=(C_1,C_2)\circ\d'$ where $|\d'|=m-2$. Let $b_i\in F_i$ be such that $P_C(a,b_i)$ holds. Then by induction assumption, $P_{C\circ \d'}(x,b_i)$ has Morley rank at least $m-1$. Since the $b_i$ lie in distinct copies of the Farey graph, and any path from $b_i$ to $b_j$ passes through $a$, the sets $P_{\d'}(x,b_i)$ and $P_{\d'}(x,b_j)$ intersect at most in the set $P_{\d'}(x,a)$. Again by induction assumption these sets have Morley rank $m-2$, showing that $P_\d(x,a)$ has Morley rank at least $m$.

We now show that for any $\C$-sequence $\d'$ and vertex $b$, the Morley rank of either $P_{\d'}(x,b)\wedge P_\d(x,a)$ or  $\neg P_{\d'}(x,b)\wedge P_\d(x,a)$ is strictly less than $m$.

Let $m'$ be minimal such that for some $\d'$ of length $m'$  this is not the case.

If $a,b$ lie in different equivalence classes, the result follows from the uniqueness in Remark~\ref{rem:unique connecting points} and the induction assumption.

If $\d'=\d_1\circ\d_2$, $P_{\d_1}(a,b)$ and $\d_2$ embeds into $\d$, then  $\neg P_{\d'}(x,b)\wedge P_\d(x,a)$ has Morley rank strictly less than $m$.

If $P_{\d_1}(a,b)$ and $\d_2$ does not embed into $\d$, then $P_{\d_2}(x,a)\wedge P_\d(x,a)$ has smaller Morley rank by minimality of $m'$. 

Hence we now assume we cannot write $\d'=\d_1\circ\d_2$ such that $P_{\d_1}(a,b)$ holds.

Consider the finitely many geodesics from $a$ to $b$ and write $\d'=(C)\circ\d''$. Now we argue exactly as in the case $m=1$. This concludes the proof.
\end{proof}

To finish our description of the model theory of the Farey graph we characterize forking independence in $T$ as in \cite{TZ} Theorem 8.5.10.

\begin{proposition} [cp. Theorem 2.31 \cite{AmmerTent2024} and Theorem 2.35 \cite{TENT_2014}]\label{prop:forking}
The following are equivalent:
    \begin{enumerate}
        \item $\indep{B}{A}{C}$.
        \item $\acl(ABC) \cong \acl(AB) \otimes_{\acl(A)} \acl(AC)$.
        \item Every path from $\acl(B)$ to $\acl(C)$ passes through $\acl(A)$. 
    \end{enumerate}

\end{proposition}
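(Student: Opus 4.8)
The plan is to exploit that $T$ is $\omega$-stable (Theorem~\ref{thm:omega}), so that forking independence exists and is the \emph{unique} ternary relation on the monster model satisfying the axioms characterizing non-forking in a stable theory, \cite{TZ} Theorem~8.5.10. I would therefore take condition (3) as the definition of a candidate relation $R(B,A,C)$ (``every path from $\acl(B)$ to $\acl(C)$ passes through $\acl(A)$''), verify that $R$ satisfies all of these axioms, and conclude $R(B,A,C)\Leftrightarrow\indep{B}{A}{C}$, i.e.\ (1)$\Leftrightarrow$(3). The equivalence (2)$\Leftrightarrow$(3) I would establish separately as a purely combinatorial statement about the forest $G_{tree}(M)$.

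For (2)$\Leftrightarrow$(3): by Proposition~\ref{prop:acl} we have $\acl(X)=\conv_M(X)$ for every finite $X$, and $G_{tree}(M)$ is a forest. Condition (3) says exactly that the convex hulls $\conv(\acl(B))$ and $\conv(\acl(C))$ meet only inside $\conv(\acl(A))$, with a single common gate in $\acl(A)$ by Corollary~\ref{cor:gate}. In tree terms this forces $\acl(AB)\cap\acl(AC)=\acl(A)$, the absence of any edge between $\acl(AB)\setminus\acl(A)$ and $\acl(AC)\setminus\acl(A)$, and $\acl(ABC)=\acl(AB)\cup\acl(AC)$ (no new algebraic points are created), which is precisely $\acl(ABC)\cong\acl(AB)\otimes_{\acl(A)}\acl(AC)$ of Definition~\ref{def:free amalgam}. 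Conversely, a cross-edge or a shared vertex outside $\acl(A)$ immediately yields a path from $\acl(B)$ to $\acl(C)$ avoiding $\acl(A)$, so (2) fails, giving the reverse implication.

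It then remains to check that $R$ is non-forking. Invariance under $\Aut(M)$ is clear, and symmetry is immediate since (3) is literally symmetric in $B$ and $C$. Monotonicity, base monotonicity and transitivity are elementary betweenness properties of paths in the forest $G_{tree}(M)$, and finite character holds because any witnessing path is finite. Local character is easy here: for finite $B$ the gate of $\conv_M(B)$ to $\conv(A)$ is determined by finitely many elements, so there is a finite $A_0\subseteq A$ through which every path from $\acl(B)$ to $\acl(A)$ passes, giving $R(B,A_0,A)$. Existence/extension follows from $\K$-saturation and Proposition~\ref{prop:amalgamtion}: one can always realize a copy of the configuration of $B$ over $\acl(A)$ attached tree-disjointly from $C$, which by (2)$\Leftrightarrow$(3) is an $R$-independent extension.

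The crux, and where I expect the main obstacle, is \emph{stationarity over algebraically closed sets}: assuming $A=\acl(A)$, $R(B,A,C)$, $R(B',A,C)$ and $\tp(B/A)=\tp(B'/A)$, one must produce $\tp(B/AC)=\tp(B'/AC)$. By quantifier elimination in $L'$ (Theorem~\ref{thm:QE}) it suffices to match quantifier-free $L'$-types, and by Remark~\ref{rem:unique connecting points} the relevant data is the position of $B$ (resp.\ $B'$) relative to its gate in $A$. Condition (3) guarantees that this gate lies in $A$ and that nothing links $B$ to $C$ except through $A$; since $\tp(B/A)=\tp(B'/A)$ pins down the gate and the local triangulated configuration, the rigidity of Corollary~\ref{cor:rigid} yields an automorphism fixing $AC$ pointwise and sending $B$ to $B'$. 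The delicate point is to assemble this automorphism coherently when $\acl(B)$ attaches to $A$ at several gates at once, or when the gate sits in $\acl(A)\setminus A$; here I would reuse the inductive automorphism-building argument from the proof of Theorem~\ref{thm:QE}, combined with the local-character reduction to finitely many gates, to reduce to the single-gate rigid case. Once stationarity and local character are in hand, \cite{TZ} Theorem~8.5.10 identifies $R$ with forking independence, completing the chain (1)$\Leftrightarrow$(3)$\Leftrightarrow$(2).
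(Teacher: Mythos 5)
Your proposal is correct and follows essentially the same route as the paper: establish (2)$\Leftrightarrow$(3) as a combinatorial fact about the forest $G_{tree}(M)$ via Proposition~\ref{prop:acl} and Corollary~\ref{cor:gate}, then identify the relation with forking independence by verifying the axioms of \cite{TZ} Theorem~8.5.10 (invariance, local character, existence, transitivity, monotonicity), using the gate and the rigidity/quantifier-elimination machinery for the uniqueness-type condition. The only cosmetic difference is that you verify full stationarity over algebraically closed sets where the paper checks the weaker condition of weak boundedness (via the same gate argument), which is harmless since stationarity implies it.
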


\begin{proof}
    $(2) \Rightarrow (3)$:
    If $acl(ABC) \cong acl(AB) \otimes_{acl(A)} acl(AC)$, then, by the definition of the free amalgam,
every path from $\acl(B)$ to $\acl(C)$ passes through \( acl(A) \). 
  $(3) \Rightarrow (2)$: Conversely, if every
 path from $\acl(B)$ to $\acl(C)$ passes through $\acl(A)$, then 
$\acl(AB) \otimes_{\acl(A)} \acl(AC)$ is convex and hence equal to $\conv(ABC)=\acl(ABC)$ by Proposition~\ref{prop:acl}.

$(1) \Leftrightarrow (2)$:
We prove that Condition (2) has the following properties:
    \begin{itemize}
        \item
        (Invariance) Clearly Condition (2) is invariant under $Aut(M)$.
        \item 
        (Local character) We need to show that for all $A \subseteq M$ finite and $C \subseteq M$ arbitrary, there is some finite set $C_0 \subseteq C$ such that every path from $A$ to $C$ passes through $C_0$. By Corollary~\ref{cor:gate} there is a finite set $C_0\subset C$ such that every path from a vertex in $\acl(A)$ to a vertex in $\acl(C)$ passes through $\acl(C_0)$.

        \item
        (Weak Boundedness)  
        We need to show that for every $b\in M$ finite and $A \subseteq M$ arbitrary, 
        there are at most countably  types $\tp(b'/C)$ for $b'\in M$ such that $\tp(b/A)=\tp(b'/A)$ and that every path from $b'$ to $\acl(C)$ passes through $\acl(A)$.
        For $b \in acl(A)$, the claim is obvious.
Otherwise there is a unique gate $a\in\acl(A)$ for $b$ to $\acl(A)$. If $a$ is the gate for $b'$ to $\acl(AC)$, then the type of $b'$ over $A$ implies the type of $b'$ over $C$, hence there is only one type.
        
        \item 
        (Existence) We need to show that for all $b\in M$  and $A \subseteq C \subseteq M$ arbitrary, there is some $b'$ such that $tp(b/A) = tp(b'/A)$ and $\indep{b'}{A}{C} $.
      If the connected component of $b$ does not intersect $A$, then $b$ is already as required. Otherwise we let $a\in \acl(A)$ be the gate for $b$ to $\acl(A)$. By saturation there exists a neighbour $F$ of $a$ in $G_{tree}(M)$ not contained in $\acl(C)$. The type of $b$ over $\acl(A)$ is isolated by a formula $P_\d(x,a)$ and hence this type is realized in $F$.
        \item 
        (Transitivity) We have to show that if every path from $\acl(A)$ to $\acl(C)$ passes through $\acl(B)$  and every path from $\acl(A)$ to $\acl(D)$ passes through $\acl(BC)$,  then also every path from $\acl(A)$ to $\acl(D)$ passes through $\acl(B)$ and this is obvious.

        \item 
        (Weak Monotonicity) We have to show that if every path from $\acl(A)$ to $\acl(CD)$ passes through $\acl(B)$, then every path from $\acl(A)$ to $\acl(C)$ passes through $\acl(B)$ and every path from $\acl(A)$ to $\acl(D)$ passes through $\acl(BC)$. This is again obvious.   \end{itemize}
\end{proof}

\section{Acknowledgement}
We thank Connor Lockhart for pointing out the omission of the first axiom and Matteo Bisi for noting an error in the first version of this note.

\end{document}